\curraddr[P. A. Razafimandimby]{Mathematics Section\\ The Abdus Salam International Center for Theoretical Physics\\
Strada Costiera, 11, I - 34151 Trieste, Italy}
\email[P.A. Razafimandimby]{prazafim@ictp.it,paul.razafim@gmail.com}
\email[M. Sango]{sango1767@gmail.com}
\theoremstyle{plain}
\newtheorem{lem}{Lemma}[section]
\newtheorem{thm}[lem]{Theorem}
\newtheorem{prop}[lem]{Proposition}
\theoremstyle{definition}
\newtheorem{Def}[lem]{Definition}
\newtheorem{Rem}[lem]{Remark}
\DeclareMathOperator{\Div}{div}
\DeclareMathOperator{\curl}{curl}
\thanks{}
\begin{document}
\title[Convergence of solution of stochastic equations of second grade fluids]{Convergence of a sequence of solutions of the stochastic
two-dimensional equations of second grade fluids  }
\author[P. A. Razafimandimby]{Paul Andr\'e RAZAFIMANDIMBY}
\author[M. Sango]{Mamadou SANGO}
\address[P. A. Razafimandimby and M. Sango]{Department of Mathematics and
Applied Mathematics\\
University of Pretoria\\
Lynwood Road, Pretoria 0002, South Africa}

\begin{abstract}
We study the limit of the stochastic model for two dimensional second grade
fluids subjected to the periodic boundary conditions as the stress modulus
tends to zero. We show that under suitable conditions on the data the whole
sequence of strong probabilistic solutions $\left( u^{\alpha }\right) $ of
the stochastic second grade fluid converges to the unique strong
probabilistic solution of the stochastic Navier-Stokes equations.
\end{abstract}

\maketitle

\section{Introduction}

Let $D=[0,L]^{2}\subset \mathbb{R}^{2}$, $L>0$, be a periodic square, $T>0$
a fixed time. Let $\alpha $ denote a sequence of positive numbers $\left( \alpha _{n}\right)
_{n\in \mathbb{N}}$ which converges to zero as $n$ converges to $\infty $;
we shall express this by just writing $\alpha \rightarrow 0$.\ We consider a
complete probability space $(\Omega ,\mathcal{F},P)$ endowed with the
filtration $\mathcal{F}^{t}$, $0\leq t\leq T$, which is the $\sigma $-field
generated by a given $\mathbb{R}^{m}$-valued standard Wiener process $\{%
\mathcal{W}(s),0\leq s\leq T\}$ and the null sets of $\mathcal{F}$. In this
paper we investigate the behavior of the sequence $\left( u^{\alpha }\right)
$ of strong probabilistic solutions of the following problems:
\begin{equation}
\begin{cases}
d(u^{\alpha }-\alpha \Delta u^{\alpha })+(-\nu \Delta u^{\alpha }+\curl%
(u^{\alpha }-\alpha \Delta u^{\alpha })\times u^{\alpha }+\nabla \mathfrak{P}%
)dt=F\,dt+Gd\mathcal{W} \\
\text{in}\,\,\Omega \times (0,T]\times D, \\
\Div u^{\alpha }=0\,\,\text{in}\,\,\Omega \times (0,T]\times D, \\
\int_{D}u^{\alpha }dx=0\,\,\text{in}\,\,\Omega \times (0,T], \\
u^{\alpha }(0)=u_{0}\,\,\text{in}\,\,\Omega \times D,%
\end{cases}
\label{secondgrade}
\end{equation}%
when $\alpha \rightarrow 0$. The system (\ref{secondgrade}), which is to be
understood in the sense of distributions, is the equations of motion for an
incompressible second grade fluid driven by random external forces. Here $%
u^{\alpha }$ is the velocity of the fluid, $\mathfrak{P}$ is a modified
pressure given by
\begin{equation*}
\mathfrak{P}=-\tilde{p}-(1/2)|u^{\alpha }|_{\mathbb{R}^{2}}^{2}+\alpha
u^{\alpha }.\Delta u^{\alpha }+(\alpha /4)\text{tr}(\nabla u^{\alpha
}+(\nabla u^{\alpha })^{t}).
\end{equation*}%
Throughout we assume that (\ref{secondgrade}) is subject to the periodic
boundary condition. We refer to \cite{noll} and \cite{dunn} for further
reading on fluid of complexity two and on second grade fluids. The interest
in the investigation of mathematical and physical problems related to second
grade fluids arises from the fact that they describe a large class of
Non-Newtonian fluids such as dilute polymeric solutions (solution of swollen
gel or oil polyols ), industrial fluids (oils,...), slurry flows; just to
cite a few. Second grade fluids are also connected to Turbulence Theory.
Indeed the discussion on the relation between Non-Newtonian fluids,
especially fluids of differential type, and Turbulence Theory started with
the work of Rivlin \cite{RIVLIN}. It was rediscovered recently (see, for
example, \cite{TITI3} and \cite{CHEN} ) that the flow of second grade fluids
can be used as a basis for a turbulence closure model.

In the deterministic case, i.e when $G(t,x)\equiv 0$, existence and
uniqueness results are given in \cite{ouazar}, \cite{girault} for instance.
It is known from \cite{IFTIMIE} that under general assumption on the data
the weak solution (in the partial differential equations sense) of second
grade fluids equations converges weakly to the weak solution of the
Navier-Stokes equations. We also refer to \cite{busioc1} for interesting
discussions related to their relationship with other fluid models. Although
there are lots of papers dealing with stochastic partial differential
equations and hydrodynamics (see, for instance, \cite{bensoussan2}, \cite%
{bensoussan}, \cite{breckner},\cite{Brzezniak3}, \cite{Brzezniak4}, \cite%
{Caraballo}, \cite{Caraballo1}, \cite{Caraballo3}, \cite{DEUGOUE1}, \cite%
{DEUGOUE2} ,\cite{GYONGY},\cite{Menaldi}, \cite{PAUL},\cite{sango1}, \cite%
{sango2},\cite{sango}), there are only few known results for the stochastic
version of second grade fluids. The existence of weak probabilistic (or
martingale) solution was recently proved in \cite{PAUL1}. In this paper, we
show that we can construct a sequence $\left( u^{\alpha _{j}}\right) $ of
strong stochastic solutions of (\ref{secondgrade}) that converges in a
certain sense (see Theorem \ref{Mainthm} and Remark \ref{strong}) to the
stochastic weak solution of the stochastic Navier-Stokes equations (SNSE) as
$\alpha _{j}\rightarrow 0$. This result was first established in \cite{PAUL3}.
We also prove
that the whole sequence $\left( u^{\alpha }\right) $ converges in
probability to the unique stochastic strong solution of the stochastic
Navier-Stokes equations in the topology of $L^{2}(0,T;\mathbb{H})$ as $%
\alpha \rightarrow 0$ (see Theorem \ref{CONVPROB}). Our proof, which is
inspired by the papers \cite{bensoussan} and \cite{IFTIMIE}, relies on
deriving estimates (independent of $\alpha $) for the velocity $u^{\alpha }$
in Sobolev $\mathbb{H}^{1}$ norm. Unfortunately, \eqref{secondgrade}
contains a very highly nonlinear term (see the $\curl$-term) and using only
the information on the $\mathbb{H}^{1}$ norm of $u^{\alpha }$ is not
sufficient to pass to the limit in this term. To overcome this difficulty,
we proved an interesting and technical mean-type estimate (see Lemma \ref%
{finite-diff}) which allows to use the deep compactness results of Prokhorov
and Skorokhod. This approach is a probabilistic refinement of the idea in
\cite{IFTIMIE}. The convergence of the whole sequence to the strong
probabilistic solution of the Stochastic Navier-Stokes equations is obtained
by using a technical lemma (Lemma \ref{CONVPROB1} ) which originated in \cite%
{GYONGY}. The present paper generalizes the deep result obtained by Iftimie
in \cite{IFTIMIE}. Our work also emphasizes the theory of Rivlin in \cite%
{RIVLIN}.

The layout of this paper is as follows. In addition to the current
introduction this article consists of four other sections.  In Section 2 we
give some notations, necessary backgrounds of probabilistic or analytical
nature. We formulate the hypotheses relevant for the paper and our main
results in Section 3. The fourth section is devoted to the proof of the
first main result.  Finally, we prove in the last section that the whole
sequence of the strong probabilistic solution for the stochastic model for
second grade fluids  converges to that of the stochastic Navier-Stokes
equations in dimension two.

\section{Preliminaries-Notations}

For a Banach space $X$ we denote by $\mathbb{X}$ the space of $\mathbb{R}^{2}
$-valued functions such that each component is an element of $X$. We denote
by $\mathbb{H}_{0}^{1}(D)$ the space of functions $u$ that belong to the
Sobolev space of periodic functions $\mathbb{H}^{1}(D)$ and satisfying
\begin{equation*}
\int_D u(x)dx=0.
\end{equation*}
We also introduce the spaces
\begin{align*}
\mathcal{V} & =\left\{ u\in\mathcal{C}_{per}^{\infty}(D):\Div u=0\text{ and }%
\int_{D}udx=0\right\} \\
\mathbb{V} & =\,\,\text{closure of $\mathcal{V}$\text{ in } }\,\,\mathbb{H}_{0}^{1}(D)
\\
\mathbb{H} & =\,\,\text{closure of $\mathcal{V}$ \text{ in } }\,\,\mathbb{L}^{2}(D),
\end{align*}
where $\mathcal{C}_{per}^{\infty}(D)$ denotes the space of infinitely
differentiable periodic function with period $L$.\newline
We denote by $(\cdot,\cdot)$ and $|\cdot|$ the inner product and the norm
induced by the inner product and the norm in $\mathbb{L}^{2}(D)$ on $\mathbb{%
H}$, respectively. Thanks to Poincaré's inequality, we can endow $\mathbb{V}$
with the gradient scalar product (resp. the norm) $\big((.,.)\big)$ (resp. $%
||.||$). In the space $\mathbb{V}$, the latter norm is equivalent to the
norm generated by the following scalar product
\begin{equation*}
(u,v)_{\mathbb{V}}=(u,v)+\alpha((u,v)),\,\,\text{for any}\,\, u\,\text{and}%
\, v\in\mathbb{V}.
\end{equation*}
More precisely we have
\begin{equation}
(\mathcal{P}+\alpha)^{-1}{|}{v}{|_{\mathbb{V}}}^{2}\le||v||^{2}\le(%
\alpha)^{-1}{|}{v}{|_{\mathbb{V}}}^{2},\,\,\text{for any}\,\, v\in\mathbb{V}.
\label{equiv}
\end{equation}
We also introduce the following space
\begin{equation*}
\mathbb{W}=\left\{ u:\Div{u}=0,\int_{D}udx=0,\text{ and }\curl{(u-\alpha%
\Delta u)}\in L^{2}(D)\right\} .
\end{equation*}
We provide this space with the norm $|.|_{\mathbb{W}}$ generated by the
scalar product
\begin{equation*}
(u,v)_{\mathbb{W}}=(\curl(u-\alpha\Delta u),\curl(v-\alpha\Delta v)).
\end{equation*}
This norm is equivalent to the usual $\mathbb{H}^{3}(D)$-norm on $\mathbb{W}$%
. For any Banach space $X$, $p,r\geq1$, we set $L^{p,r}(0,T,\Omega,X)=L^{p}(%
\Omega,\mathcal{F},P;L^{r}(0,T;X)$. Next we give some results on which most
of proofs in forthcoming sections rely.

\begin{thm}
{[} see \cite{simon}{]} \label{comsimon} Let $X,B,Y$ three Banach spaces
such that the following embedding are continuous
\begin{equation*}
X\subset B\subset Y.
\end{equation*}
Moreover, assume that the embedding $X\subset B$ is compact, then the set $%
\mathfrak{F}$ consisting of functions $v\in L^{q}(0,T;B)\cap
L_{loc}^{1}(0,T;X)$, $1\le q\le\infty$ such that
\begin{equation*}
\sup_{0\le h\le1}\int_{t_{1}}^{t_{2}}|v(t+h)-v(t)|_{Y}dt\rightarrow0,\,\text{%
as}\, h\rightarrow0,
\end{equation*}
for any $0<t_{1}<t_{2}<T$ is compact in $L^{p}(0,T;B)$ for any $p<q$.
\end{thm}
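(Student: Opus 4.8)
The plan is to reduce the statement to the vector-valued Riesz--Fr\'echet--Kolmogorov characterisation of relative compactness in $L^{p}(0,T;B)$, and to feed that criterion with an Ehrling-type interpolation inequality, which is precisely where the compactness of the embedding $X\subset B$ enters. Throughout, the time-translation hypothesis is read uniformly over the family, $\sup_{v\in\mathfrak{F}}\int_{t_{1}}^{t_{2}}|v(t+h)-v(t)|_{Y}\,dt\to0$ as $h\to0$ for every $0<t_{1}<t_{2}<T$ (functions being extended by $0$ outside $(0,T)$ when writing $v(t+h)$), and one uses the implicit boundedness of $\mathfrak{F}$ in $L^{q}(0,T;B)$ and in $L^{1}(t_{1},t_{2};X)$ for each $0<t_{1}<t_{2}<T$. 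Recall that the criterion requires, for relative compactness of a bounded set $\mathfrak{F}\subset L^{p}(0,T;B)$, $1\le p<\infty$, that (i) $\big\{\int_{t_{1}}^{t_{2}}v(t)\,dt:v\in\mathfrak{F}\big\}$ be relatively compact in $B$ for every $0<t_{1}<t_{2}<T$, and (ii) $\sup_{v\in\mathfrak{F}}\|v(\cdot+h)-v\|_{L^{p}(0,T-h;B)}\to0$ as $h\to0$.

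\textbf{Ehrling-type inequality.} First I would prove that for each $\eta>0$ there is a constant $C_{\eta}>0$ with
\begin{equation*}
|w|_{B}\le\eta|w|_{X}+C_{\eta}|w|_{Y}\qquad\text{for all }w\in X .
\end{equation*}
Arguing by contradiction, a failure for some $\eta>0$ produces $w_{n}\in X$ with $|w_{n}|_{B}=1$ and $1>\eta|w_{n}|_{X}+n|w_{n}|_{Y}$; then $(w_{n})$ is bounded in $X$, so along a subsequence $w_{n}\to w$ in $B$ by compactness of $X\subset B$, while $|w_{n}|_{Y}<1/n\to0$ forces $w=0$ (using continuity of $B\subset Y$ and uniqueness of limits in $Y$), contradicting $|w|_{B}=\lim|w_{n}|_{B}=1$.

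\textbf{Verification of (i) and (ii).} For (i): if $v\in\mathfrak{F}$ then $\int_{t_{1}}^{t_{2}}v(t)\,dt$ is a Bochner integral in $X$ with $\big|\int_{t_{1}}^{t_{2}}v(t)\,dt\big|_{X}\le\|v\|_{L^{1}(t_{1},t_{2};X)}$, so these integrals form a bounded subset of $X$, hence a relatively compact subset of $B$. For (ii), apply the Ehrling inequality pointwise to $w=v(t+h)-v(t)$ and integrate:
\begin{equation*}
\int_{t_{1}}^{t_{2}}|v(t+h)-v(t)|_{B}\,dt\le\eta\int_{t_{1}}^{t_{2}}|v(t+h)-v(t)|_{X}\,dt+C_{\eta}\int_{t_{1}}^{t_{2}}|v(t+h)-v(t)|_{Y}\,dt .
\end{equation*}
The first term is at most $2\eta\sup_{v\in\mathfrak{F}}\|v\|_{L^{1}(t_{1},t_{2};X)}=:2\eta M$ and the second tends to $0$ uniformly over $\mathfrak{F}$ as $h\to0$; letting $h\to0$ and then $\eta\to0$ gives $\sup_{v\in\mathfrak{F}}\int_{t_{1}}^{t_{2}}|v(t+h)-v(t)|_{B}\,dt\to0$. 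To pass from $L^{1}$ to $L^{p}$ on the full interval I would, on $(t_{1},t_{2})$, interpolate: since $t\mapsto v(t+h)-v(t)$ is bounded in $L^{q}(0,T;B)$ uniformly over $\mathfrak{F}$ and $p<q$, one has $\|v(\cdot+h)-v\|_{L^{p}(t_{1},t_{2};B)}\le\|v(\cdot+h)-v\|_{L^{1}(t_{1},t_{2};B)}^{\theta}\|v(\cdot+h)-v\|_{L^{q}(0,T;B)}^{1-\theta}$ with $\theta\in(0,1]$ given by $\tfrac1p=\theta+\tfrac{1-\theta}{q}$ (the case $q=\infty$ being analogous), so it tends to $0$ uniformly; and near the endpoints, for $q>p$, $\|v(\cdot+h)-v\|_{L^{p}((0,t_{1})\cup(t_{2},T-h);B)}\le(t_{1}+T-t_{2})^{1/p-1/q}\,2\|v\|_{L^{q}(0,T;B)}$, uniformly small once $t_{1}$ is small and $t_{2}$ close to $T$. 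Combining yields (ii).

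\textbf{Conclusion and main difficulty.} Once (i) and (ii) hold, the Riesz--Fr\'echet--Kolmogorov criterion gives relative compactness of $\mathfrak{F}$ in $L^{p}(0,T;B)$ for every $p<q$. The crux is the combination carried out in the verification step: the compact embedding $X\subset B$ is used only through the Ehrling inequality, which is exactly what lets one trade the uniform $X$-bound against the merely $Y$-valued equicontinuity of time translations, and it is the interpolation against the $L^{q}$-bound that forces the restriction $p<q$. The other substantial ingredient, which I would simply cite, is the Riesz--Fr\'echet--Kolmogorov compactness criterion itself in the Bochner space $L^{p}(0,T;B)$ (established by approximating $v$ by its Steklov-type time averages and invoking compactness in $B$).
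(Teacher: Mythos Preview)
The paper does not give its own proof of this theorem: it is stated with the attribution ``[see \cite{simon}]'' and used as a black box, so there is no in-paper argument to compare against. Your proposal is essentially the classical proof from Simon's paper: the Ehrling--Lions interpolation inequality (proved by the standard contradiction/compactness argument) converts the uniform $Y$-equicontinuity of translations plus the $L^{1}_{\mathrm{loc}}(0,T;X)$ bound into $L^{1}$-equicontinuity in $B$, and then H\"older interpolation against the $L^{q}(0,T;B)$ bound upgrades this to $L^{p}$ for $p<q$, with the endpoint contribution handled by the smallness of the measure; the Riesz--Fr\'echet--Kolmogorov criterion in Bochner spaces then concludes. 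This is correct, and you were right to make explicit the uniform boundedness of $\mathfrak{F}$ in $L^{q}(0,T;B)$ and in $L^{1}(t_{1},t_{2};X)$, which the paper's statement leaves implicit but without which the conclusion would fail.
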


We also need the following product formulas, we refer to \cite{Chemin} for
their proof in the case of the whole space (see \cite{Gallagher} for the
case of periodic condition).

\begin{thm}
\label{product-formula} Let $D$ be a $n$-dimensional periodic box and let $%
\beta,\gamma\in\mathbb{R}$ such that $\beta+\gamma>0$, $\beta<\frac{n}{2}$, $%
\gamma<\frac{n}{2}$. If $u\in\mathbb{H}^{\gamma}(D)$ and $v\in\mathbb{H}%
^{\beta}(D)$, then there exists a positive constant $C$ such that
\begin{equation*}
|uv|_{\mathbb{H}^{\gamma+\beta-\frac{n}{2}}}\le C|u|_{\mathbb{H}%
^{\gamma}}|v|_{\mathbb{H}^{\beta}}.
\end{equation*}
If $|\gamma|<\frac{n}{2}$, then
\begin{equation}
|uv|_{\mathbb{H}^{-\frac{n}{2}-\varepsilon}}\le C^{\prime}|u|_{\mathbb{H}%
^{\gamma}}|v|_{\mathbb{H}^{-\gamma}},
\end{equation}
for any $u\in\mathbb{H}^{\gamma}(D)$, $v\in\mathbb{H}^{-\gamma}(D)$ and $%
\varepsilon>0$ .
\end{thm}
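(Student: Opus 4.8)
The plan is to derive both inequalities from Bony's paraproduct calculus on the torus, organised so that the three hypotheses $\gamma<n/2$, $\beta<n/2$ and $\gamma+\beta>0$ are each consumed by exactly one of the three pieces of the product. First I would pass to frequency space: expanding in Fourier series on $D=[0,L]^{n}$, Plancherel's identity gives $|w|_{\mathbb{H}^{s}(D)}^{2}\simeq\sum_{k}(1+|k|^{2})^{s}|\widehat{w}(k)|^{2}$ and $\widehat{uv}(k)=\sum_{j}\widehat{u}(j)\widehat{v}(k-j)$. Since the smallest nonzero frequency on $D$ is bounded below, the spaces $\mathbb{H}^{s}(D)$ restricted to mean-zero functions carry norms equivalent to their homogeneous counterparts, and, splitting $u$, $v$ into mean and mean-zero part, the zero-frequency contributions to the product $uv$ are readily bounded by $|u|_{\mathbb{H}^{\gamma}}|v|_{\mathbb{H}^{\beta}}$ (using the pairing $\mathbb{H}^{\gamma}\times\mathbb{H}^{-\gamma}$ and $\beta\ge-\gamma$ for the mean of the mean-zero product). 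It therefore suffices to work with mean-zero $u,v$ and homogeneous Littlewood--Paley blocks $\Delta_{q}$ ($q\in\mathbb{Z}$, frequencies $|k|\sim 2^{q}$), $S_{q}=\sum_{q'<q}\Delta_{q'}$, exactly as on $\mathbb{R}^{n}$, the product being understood through the absolutely convergent decomposition
\begin{equation*}
uv=T_{u}v+T_{v}u+R(u,v),\qquad T_{u}v=\sum_{q}S_{q-1}u\,\Delta_{q}v,\quad R(u,v)=\sum_{|q-q'|\le 1}\Delta_{q}u\,\Delta_{q'}v .
\end{equation*}

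For the paraproduct $T_{u}v$ I would use $\gamma<n/2$: Bernstein's inequality gives $\|\Delta_{q'}u\|_{L^{\infty}}\lesssim 2^{q'n/2}\|\Delta_{q'}u\|_{L^{2}}$, and summing the geometric series (convergent because $n/2-\gamma>0$) yields $\|S_{q-1}u\|_{L^{\infty}}\lesssim 2^{q(n/2-\gamma)}|u|_{\mathbb{H}^{\gamma}}$. Since $S_{q-1}u\,\Delta_{q}v$ has Fourier support in an annulus of size $2^{q}$, this gives
\begin{equation*}
\sum_{q}2^{2q(\gamma+\beta-n/2)}\|S_{q-1}u\,\Delta_{q}v\|_{L^{2}}^{2}\lesssim |u|_{\mathbb{H}^{\gamma}}^{2}\sum_{q}2^{2q\beta}\|\Delta_{q}v\|_{L^{2}}^{2}\lesssim |u|_{\mathbb{H}^{\gamma}}^{2}\,|v|_{\mathbb{H}^{\beta}}^{2},
\end{equation*}
i.e. $\|T_{u}v\|_{\mathbb{H}^{\gamma+\beta-n/2}}\lesssim |u|_{\mathbb{H}^{\gamma}}|v|_{\mathbb{H}^{\beta}}$; exchanging the roles of $u$ and $v$ and using $\beta<n/2$ gives the same bound for $T_{v}u$.

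The resonant term $R(u,v)$ is where $\gamma+\beta>0$ is needed, and this is the delicate step. Writing $R(u,v)=\sum_{q}\Delta_{q}u\,\widetilde{\Delta}_{q}v$ with $\widetilde{\Delta}_{q}=\Delta_{q-1}+\Delta_{q}+\Delta_{q+1}$, each term has Fourier support in a \emph{ball} of radius $\lesssim 2^{q}$ (not an annulus), so $\Delta_{j}R(u,v)=\sum_{q\ge j-N}\Delta_{j}(\Delta_{q}u\,\widetilde{\Delta}_{q}v)$, and Bernstein from $L^{1}$ to $L^{2}$ gives $\|\Delta_{j}R(u,v)\|_{L^{2}}\lesssim 2^{jn/2}\sum_{q\ge j-N}\|\Delta_{q}u\|_{L^{2}}\|\widetilde{\Delta}_{q}v\|_{L^{2}}$. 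With $a_{q}=2^{q\gamma}\|\Delta_{q}u\|_{L^{2}}$ and $b_{q}=2^{q\beta}\|\widetilde{\Delta}_{q}v\|_{L^{2}}$ (both in $\ell^{2}$, with norms $\lesssim|u|_{\mathbb{H}^{\gamma}}$, $|v|_{\mathbb{H}^{\beta}}$), this reads
\begin{equation*}
2^{j(\gamma+\beta-n/2)}\|\Delta_{j}R(u,v)\|_{L^{2}}\lesssim\sum_{q\ge j-N}2^{(j-q)(\gamma+\beta)}a_{q}b_{q},
\end{equation*}
and since $\gamma+\beta>0$ the weight $m\mapsto 2^{m(\gamma+\beta)}\mathbf{1}_{\{m\le N\}}$ lies in $\ell^{1}$, so Young's inequality for discrete convolutions together with Cauchy--Schwarz ($\|a_{q}b_{q}\|_{\ell^{1}}\le\|a\|_{\ell^{2}}\|b\|_{\ell^{2}}$) give $\|R(u,v)\|_{\mathbb{H}^{\gamma+\beta-n/2}}\lesssim|u|_{\mathbb{H}^{\gamma}}|v|_{\mathbb{H}^{\beta}}$. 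Summing the three contributions proves the first inequality. For the second I would take $\beta=-\gamma$: then $|\gamma|<n/2$ forces both $\gamma<n/2$ and $-\gamma<n/2$, so the two paraproducts still satisfy $\|T_{u}v\|_{\mathbb{H}^{-n/2}}+\|T_{v}u\|_{\mathbb{H}^{-n/2}}\lesssim|u|_{\mathbb{H}^{\gamma}}|v|_{\mathbb{H}^{-\gamma}}$, while $R(u,v)$ degenerates at the endpoint $\gamma+\beta=0$: there the last display only yields $\sup_{j}2^{-jn/2}\|\Delta_{j}R(u,v)\|_{L^{2}}\lesssim\|a\|_{\ell^{2}}\|b\|_{\ell^{2}}$, i.e. $R(u,v)\in B^{-n/2}_{2,\infty}$, and the embedding $B^{-n/2}_{2,\infty}\hookrightarrow\mathbb{H}^{-n/2-\varepsilon}$ (valid for every $\varepsilon>0$ on the bounded domain $D$) together with $\mathbb{H}^{-n/2}\hookrightarrow\mathbb{H}^{-n/2-\varepsilon}$ for the paraproducts gives the claimed estimate.

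The hard part is the resonant term $R(u,v)$. Unlike the two paraproducts it is not frequency-localised --- its output reaches arbitrarily low frequencies --- so one cannot simply read off its $j$-th dyadic block but must sum the whole high-frequency tail $q\ge j-N$, and this tail is summable only because $\gamma+\beta>0$; at the borderline $\gamma+\beta=0$ the best one obtains is the Besov bound $B^{-n/2}_{2,\infty}$, which is precisely why the loss of $\varepsilon$ derivatives cannot be avoided in the second inequality. A more routine but still necessary point is the reduction from the periodic box to the whole-space calculus: controlling the mean-value parts of $u$, $v$ in the product and using the spectral gap of $D$ to identify the inhomogeneous and homogeneous Sobolev norms on mean-zero functions, which is what makes the Littlewood--Paley estimates above legitimate on $D$.
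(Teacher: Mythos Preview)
Your proof is correct. The paper does not actually prove this theorem; it simply quotes the result and refers to Chemin \cite{Chemin} for the whole-space case and to Gallagher \cite{Gallagher} for the periodic adaptation. Your argument via Bony's paraproduct decomposition is precisely the standard proof found in those references: the hypotheses $\gamma<n/2$ and $\beta<n/2$ control $T_{u}v$ and $T_{v}u$ respectively through Bernstein's inequality, while $\gamma+\beta>0$ makes the high-frequency tail in $R(u,v)$ summable; at the endpoint $\gamma+\beta=0$ the resonant term lands only in $B^{-n/2}_{2,\infty}$, which is exactly the source of the $\varepsilon$-loss in the second estimate. Your handling of the periodic setting (spectral gap, reduction to mean-zero functions, treatment of the zero mode of $uv$ via the $\mathbb{H}^{\gamma}$--$\mathbb{H}^{-\gamma}$ pairing) is also the right way to transfer the $\mathbb{R}^{n}$ calculus to the torus, in line with \cite{Gallagher}.
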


\section{Hypotheses and the main results}

In this section we will recall briefly the previous results on the problem (%
\ref{secondgrade}) and will formulate our main results.

\subsection{The hypotheses}

We assume that

\begin{enumerate}
\item[(I)] $F=F(t,x)$ is a $\mathbb{V}$-valued function defined on $%
[0,T]\times D$ such that the following holds for any $2\le p<\infty$
\begin{equation*}
\int_{0}^{T}|F(t,x)|_{\mathbb{V}}^{p}<\infty.
\end{equation*}

\item[(II)] $G=G(t,x)$ is a $\mathbb{V}^{\otimes m}$-valued function defined
on $[0,T]\times D$ such that the following holds
\begin{equation*}
\int_{0}^{T}|G(t,x)|_{\mathbb{V}^{\otimes m}}^{p}<\infty,
\end{equation*}
for any $2\le p<\infty$.

\item[(III)] We further assume that $u_{0}\subset\mathbb{V}\cap\mathbb{H}^{3}
$ is nonrandom and that there exists a positive constant $C$ independent of $%
\alpha$ such that $|u_{0}|_{\mathbb{V}}<C$. Suppose also that $\nu>0$.
\end{enumerate}

We continue with the definition of the concept of the strong probabilistic
solution for the problem (\ref{secondgrade}).

\begin{Def}
\label{defstrong}  By a strong probabilistic solution of the system %
\eqref{secondgrade}, we mean a stochastic process $u^{\alpha}$ such that

\begin{enumerate}
\item $u^{\alpha}\in L^{p}(\Omega,\mathcal{F},P;L^{\infty}(0,T;\mathbb{W}%
))\cap L^{p}(\Omega,\mathcal{F},P;L^{\infty}(0,T;\mathbb{V}))$ with $2\le
p<\infty,$

\item For almost all $t$, $u^{\alpha}(t)$ is $\mathcal{F}^{t}$-measurable,

\item $P$-a.s the following integral equation holds
\begin{equation*}
\begin{split}
& (u^{\alpha}(t)-u^{\alpha}(0),\phi)_{\mathbb{V}}+\int_{0}^{t}\left[%
\nu((u^{\alpha},\phi))+(\curl(u^{\alpha}(s)-\alpha\Delta
u^{\alpha}(s))\times u^{\alpha}(s),\phi)\right]ds \\
& =\int_{0}^{t}(F,\phi)ds+\int_{0}^{t}(G,\phi)d\mathcal{W}(s)
\end{split}
\end{equation*}
for any $t\in(0,T]$ and $\phi\in\mathcal{V}$.
\end{enumerate}
\end{Def}

\begin{Rem}
\label{REMARK1}  In the above definition the quantity $\int_{0}^{t}(G,\phi)d%
\mathcal{W}(s)$ should be understood as
\begin{equation*}
\int_{0}^{t}(G,\phi)d\mathcal{W}(s)=\sum_{k=1}^{m}\int_{0}^{t}(G_{k},\phi)d%
\mathcal{W}_{k}(s),
\end{equation*}
where $G_{k}$ and $\mathcal{W}_{k}$ denote the $k$-th component of $G$ and $%
\mathcal{W}$, respectively.
\end{Rem}

We recall now two results from the paper \cite{paul-sango2}.

\begin{thm}
Under the assumptions (I), (II) and (III) the problem \eqref{secondgrade}
has a solution in the sense of the above definition. Moreover, almost surely
the paths of the solution are $\mathbb{W}$-valued weakly continuous.

Let $u_{1}^{\alpha}$ and $u_{2}^{\alpha}$ be two strong probabilistic
solutions of the problem (\ref{secondgrade}) defined on the stochastic basis
$(\Omega,\mathcal{F},\mathcal{F}^{t},P)$. If we set $U^{\alpha}=u_{1}^{%
\alpha}-u_{2}^{\alpha}$, then we have $U^{\alpha}=0$ almost surely.
\end{thm}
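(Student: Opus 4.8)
The plan is to prove existence by a Galerkin approximation combined with compactness, and to prove uniqueness by an energy estimate on the difference. For existence, I would first fix a special basis of $\mathbb{W}$, namely the eigenfunctions $(e_j)_{j\ge 1}$ of the Stokes-type operator associated with the scalar product $(\cdot,\cdot)_{\mathbb{V}}$; since these eigenfunctions are smooth (periodic) they also belong to $\mathbb{W}$. Set $\unu_N(t)=\sum_{j=1}^N c_j^N(t)e_j$ and project \eqref{secondgrade} onto $\Span\{e_1,\dots,e_N\}$. This yields a finite system of stochastic ODEs for the coefficients $c_j^N$ with locally Lipschitz coefficients (the nonlinearity is polynomial in the $c_j^N$), so a unique local strong solution exists; global existence will follow once a priori bounds are in hand. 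The key a priori estimate is obtained by applying the It\^o formula to $|\unu_N(t)|_{\mathbb{V}}^2$. The crucial structural fact is that the trilinear term $(\curl(v-\alpha\Delta v)\times v,v)$ vanishes for $v$ in the Galerkin space (this is the usual antisymmetry of the second grade nonlinearity, proved by integration by parts using $\Div v=0$), so it drops out; what remains, after using the It\^o correction from the $Gd\mathcal{W}$ term together with hypotheses (I)--(II) on $F,G$ and Gronwall's lemma, gives a bound on $E\sup_{t\le T}|\unu_N(t)|_{\mathbb{V}}^{2p}$ uniform in $N$, and hence in particular a uniform $L^p(\Omega;L^\infty(0,T;\mathbb{V}))$ bound. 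To control the $\mathbb{W}$-norm one argues similarly: apply It\^o to $|\curl(\unu_N-\alpha\Delta\unu_N)|^2$ (equivalently to $|\unu_N|_{\mathbb{W}}^2$), here the nonlinear term does not fully cancel but can be estimated using the two-dimensional Sobolev/product inequalities (Theorem~\ref{product-formula}) and the already-established $\mathbb{V}$-bound, again closing by Gronwall to get the uniform $L^p(\Omega;L^\infty(0,T;\mathbb{W}))$ bound required by part~(1) of Definition~\ref{defstrong}.

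Next I would pass to the limit. From the uniform bounds one extracts (for fixed $\alpha$) a subsequence converging weakly-$\ast$ in $L^p(\Omega;L^\infty(0,T;\mathbb{W}))$ and weakly in $L^2(\Omega\times(0,T);\mathbb{W})$; to handle the nonlinear term one needs strong convergence in a weaker topology, obtained from a time-regularity estimate on $\unu_N$ (fractional Sobolev in $t$, coming from the equation itself and the standard estimate on stochastic integrals) together with a compactness argument — either Theorem~\ref{comsimon} in the deterministic variables after a tightness/Skorokhod step, or, since the basis is fixed and the filtration is the one generated by $\mathcal{W}$, a direct argument showing the sequence is Cauchy. With strong convergence in, say, $L^2(\Omega\times(0,T);\mathbb{H}^s)$ for suitable $s$, all terms in the projected equation pass to the limit, and one checks the limit $u^\alpha$ satisfies part~(3) of Definition~\ref{defstrong} for every $\phi=e_j$, hence for every $\phi\in\mathcal{V}$ by density. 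Measurability (part~(2)) is inherited because each $\unu_N(t)$ is $\mathcal{F}^t$-measurable and this property is preserved under the limit. The weak continuity of the $\mathbb{W}$-valued paths follows from the $L^\infty(0,T;\mathbb{W})$ bound together with continuity of $t\mapsto(u^\alpha(t),e_j)_{\mathbb{V}}$, which the integral equation provides, via a standard lemma on weakly continuous representatives (Strauss's lemma).

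For uniqueness, let $U^\alpha=u_1^\alpha-u_2^\alpha$ solve the difference equation, which has \emph{no} stochastic term (the $Gd\mathcal{W}$ parts cancel) and $U^\alpha(0)=0$. Apply the It\^o formula — in fact just the deterministic chain rule, since the It\^o correction vanishes — to $|U^\alpha(t)|_{\mathbb{V}}^2$. The viscous term gives $-2\nu\|U^\alpha\|^2\le 0$ and is discarded. The difference of the nonlinear terms must be estimated by $C(t)|U^\alpha|_{\mathbb{V}}^2$, where $C(t)$ depends only on norms of $u_1^\alpha,u_2^\alpha$ that are finite by part~(1) of the definition; here one uses the algebra/product estimates of Theorem~\ref{product-formula} in two dimensions, exploiting that $\mathbb{W}\hookrightarrow\mathbb{H}^3$ so $\curl(u_i^\alpha-\alpha\Delta u_i^\alpha)\in L^2$ is controlled and $u_i^\alpha\in\mathbb{V}$. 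Gronwall's inequality then forces $|U^\alpha(t)|_{\mathbb{V}}^2=0$ for all $t$, $P$-a.s.

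The main obstacle, as the introduction itself flags, is the highly nonlinear $\curl$-term. Concretely, the difficulty is twofold: first, in the a priori estimates, obtaining the $\mathbb{W}$-bound requires delicately bounding $(\curl(v-\alpha\Delta v)\times v, \curl(v-\alpha\Delta v))$ — in two dimensions this works only because of the special structure and the borderline Sobolev embeddings, so the estimate must be set up so that the top-order part is absorbed or cancels and the remainder is lower order; second, in the limit passage, the nonlinearity is not continuous with respect to weak convergence, so establishing the requisite strong convergence (the correct $t$-regularity estimate feeding into the compactness theorem, uniformly in the Galerkin index) is the technical heart of the argument.
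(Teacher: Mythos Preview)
The paper does not actually prove this theorem: immediately before its statement the authors write ``We recall now two results from the paper \cite{paul-sango2}'', and no proof or sketch is supplied here. So there is nothing in the present paper to compare your proposal against; the result is imported wholesale from the cited preprint.

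That said, your outline is broadly the standard route for second grade fluids and is in the spirit of \cite{paul-sango2} and its deterministic antecedents \cite{ouazar,girault}. Two technical points are worth sharpening. First, in the $\mathbb{W}$-estimate the nonlinear term in fact \emph{cancels} in two dimensions rather than merely being lower order: writing $q=\curl(u-\alpha\Delta u)$ one has $\curl(q\times u)=u\cdot\nabla q$ when $\Div u=0$, hence $(u\cdot\nabla q,q)=0$; the genuine subtlety is that this cancellation need not survive the Galerkin projection unless the basis is chosen so that the projector commutes appropriately with $\curl(I-\alpha\Delta)$, which is why a special basis (not simply the $(\cdot,\cdot)_{\mathbb{V}}$-eigenfunctions you propose) is used in the second grade literature. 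Second, to land on a \emph{strong} probabilistic solution on the prescribed stochastic basis $(\Omega,\mathcal{F},\mathcal{F}^t,P,\mathcal{W})$, a tightness/Skorokhod argument alone only produces a martingale solution on a new space; one must either run a direct Cauchy argument on the Galerkin sequence or combine the martingale construction with pathwise uniqueness via the Gy\"ongy--Krylov device (exactly Lemma~\ref{CONVPROB1} of this paper). Your uniqueness sketch is correct: the noise cancels in the difference, and the trilinear remainder is controlled by $|U^\alpha|_{\mathbb{V}}^2$ times a coefficient involving $|u_i^\alpha|_{\mathbb{W}}$, after which Gronwall closes.
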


\subsection{Statement of the main theorems}

Before we proceed to the statement of our main theorem we introduce the SNSE
\begin{equation}
\begin{cases}
dv+(-\nu \Delta v+(v.\nabla v)+\nabla \mathcal{P})dt=Fdt+Gd\bar{\mathcal{W}}
\\
\text{in}\,\,\bar{\Omega}\times (0,T]\times D, \\
\Div v=0\,\,\text{in}\,\,\bar{\Omega}\times (0,T]\times D, \\
\int_{D}vdx=0\,\,\text{in}\,\,\bar{\Omega}\times (0,T], \\
v(0)=u_{0}\,\,\text{in}\,\,\bar{\Omega}\times D.%
\end{cases}
\label{SNSE}
\end{equation}%
and recall the concept of a weak probabilistic solution of the problem.

\begin{Def}
\label{defweaknc} By a weak probabilistic solution of (\ref{SNSE}), we mean
a system
\begin{equation*}
(\bar{\Omega},\bar{\mathcal{F}},\bar{P},\bar{\mathcal{F}}^{t},\bar{\mathcal{W%
}},v),
\end{equation*}%
where

\begin{enumerate}
\item $(\bar{\Omega},\bar{\mathcal{F}},\bar{P})$ is a complete probability
space, $\bar{\mathcal{F}}^{t}$ is a filtration on $(\bar{\Omega},\bar{%
\mathcal{F}},\bar{P})$,

\item $\bar{\mathcal{W}}(t)$ is an $m$-dimensional $\bar{\mathcal{F}}^{t}$%
-standard Wiener process,

\item $v(t)\in L^{p}(\bar{\Omega},\bar{\mathcal{F}},\bar{P};L^{2}(0,T;%
\mathbb{V}))\cap L^{p}(\bar{\Omega},\bar{\mathcal{F}},\bar{P};L^{\infty}(0,T;%
\mathbb{H}))$, $\forall$\,\,\,\, $2\le p<\infty,$

\item For almost all $t$, $v(t)$ is $\bar{\mathcal{F}^{t}}$-measurable,

\item P-a.s the following integral equation holds
\begin{equation}
\begin{split}
& (v(t)-v(0),\phi)+\int_{0}^{t}\left[\nu((v,\phi))+<v.\nabla v,\phi>\right]ds
\\
& =\int_{0}^{t}(F,\phi)ds+\int_{0}^{t}(G,\phi)d\bar{\mathcal{W}}(s)
\end{split}
\label{solve}
\end{equation}
for any $t\in(0,T]$ and $\phi\in\mathcal{V}$.
\end{enumerate}
\end{Def}

In this article we prove the following result.

\begin{thm}
\label{Mainthm} Under the hypotheses (I)-(III) there exist a probability
space $(\bar{\Omega},\bar{\mathcal{F}},\bar{P})$, a family of probability
measures $(\Pi ^{\alpha _{j}})$, a probability measure $\Pi $, and
stochastic processes $(\mathcal{W}^{\alpha _{j}},u^{\alpha _{j}})$, $(\bar{%
\mathcal{W}},v)$ such that the law of $(\mathcal{W}^{\alpha _{j}},u^{\alpha
_{j}})$ (resp. $(\bar{\mathcal{W}},v)$) is $\Pi ^{\alpha _{j}}$ (resp. $\Pi $%
) and $\mathcal{W}^{\alpha _{j}}\rightarrow \bar{\mathcal{W}}$ uniformly $%
\bar{P}$-a.s. when $j\rightarrow \infty $ ($\alpha _{j}\rightarrow 0$). The
pair $(\mathcal{W}^{\alpha _{j}},u^{\alpha _{j}})$ satisfies $\bar{P}-$a.s. %
\eqref{secondgrade} in the sense of distribution and as $j\rightarrow \infty
$ ($\alpha _{j}\rightarrow 0$)
\begin{align}
u^{\alpha _{j}}& \rightharpoonup v,\,\text{weakly in}\,\,L^{p}(\bar{\Omega},%
\bar{\mathcal{F}},\bar{P};L^{2}(0,T;\mathbb{V})), \\
u^{\alpha _{j}}& \rightharpoonup v\,\,\text{weakly-* in}\,\,L^{p}(\bar{\Omega%
},\bar{\mathcal{F}},\bar{P};L^{\infty }(0,T;\mathbb{H})),
\end{align}%
for all $2\leq p<\infty $ and $(\bar{\Omega},\bar{\mathcal{F}},\bar{P},v,%
\bar{\mathcal{W}})$ is a weak probabilistic solution of (\ref{SNSE}).
\end{thm}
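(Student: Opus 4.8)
The plan is to construct the sequence $(u^{\alpha_j})$ by first deriving a priori estimates for the solutions $u^\alpha$ of \eqref{secondgrade} that are \emph{uniform in} $\alpha$, then to invoke tightness (Prokhorov) and a.s.\ representation (Skorokhod) on a suitable path space, and finally to pass to the limit in the weak formulation to identify the limit as a weak probabilistic solution of \eqref{SNSE}. The crucial preliminary step is the energy-type estimate: applying It\^o's formula to $|u^\alpha(t)|_{\ve}^2 = |u^\alpha|^2 + \alpha\|u^\alpha\|^2$, the nonlinear $\curl$-term drops out (since $(\curl(u^\alpha-\alpha\Delta u^\alpha)\times u^\alpha, u^\alpha)=0$), and one obtains, using hypotheses (I)--(III) and the Burkholder--Davis--Gundy inequality,
\begin{equation*}
\bar{\mathbb{E}}\sup_{0\le t\le T}|u^\alpha(t)|_{\ve}^{2p} + \bar{\mathbb{E}}\left(\int_0^T \|u^\alpha(s)\|^2\,ds\right)^{p} \le C,
\end{equation*}
with $C$ independent of $\alpha$. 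Since $|u^\alpha|\le |u^\alpha|_{\ve}$ and $\alpha\|u^\alpha\|^2\le|u^\alpha|_{\ve}^2$, this gives a bound for $u^\alpha$ in $L^p(\bar\Omega;L^\infty(0,T;\h))$ and for $\sqrt{\alpha}\,u^\alpha$ in $L^p(\bar\Omega;L^\infty(0,T;\ve))$, plus the $L^2(0,T;\ve)$ control; note one does \emph{not} get a uniform $L^\infty(0,T;\ve)$ bound, only $L^2$ in time, which is exactly enough to recover the $\ve$-regularity claimed for $v$.

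Next I would establish the compactness needed for tightness. The estimate above controls $u^\alpha$ in $L^2(0,T;\ve)$ but, because of the highly nonlinear $\curl$-term, it is not clear that $\partial_t(u^\alpha-\alpha\Delta u^\alpha)$ lies in a space good enough to apply a standard Aubin--Lions argument directly; this is where the fractional-in-time "mean-type" estimate announced in the introduction (the analogue of Lemma~\ref{finite-diff}) enters, providing a bound of the form $\bar{\mathbb{E}}\int_{t_1}^{t_2}|u^\alpha(t+h)-u^\alpha(t)|_{Y}\,dt \le C\,h^{\gamma}$ for a suitable weak norm $Y$ and $\gamma>0$, uniformly in $\alpha$. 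Combined with the uniform $L^2(0,T;\ve)$ bound and Theorem~\ref{comsimon}, this makes the laws $\Pi^{\alpha}$ of $(\mathcal{W}^\alpha, u^\alpha)$ tight on $C(0,T;\mathbb{R}^m)\times (L^2(0,T;\h)\cap L^2_w(0,T;\ve))$; also the Wiener process laws are trivially tight. Prokhorov then yields a weakly convergent subsequence $\Pi^{\alpha_j}\to\Pi$, and Skorokhod's theorem gives a new probability space $(\bar\Omega,\bar{\mathcal{F}},\bar P)$ with random variables $(\mathcal{W}^{\alpha_j},u^{\alpha_j})\to(\bar{\mathcal{W}},v)$ $\bar P$-a.s.\ in that topology, with $\mathcal{W}^{\alpha_j}\to\bar{\mathcal{W}}$ uniformly. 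The uniform bounds transfer to the new variables (laws are preserved), giving the claimed weak and weak-$*$ convergences in the Bochner spaces.

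Finally I would pass to the limit in the weak formulation. For fixed $\phi\in\mathcal{V}$, the pair $(\mathcal{W}^{\alpha_j},u^{\alpha_j})$ satisfies \eqref{secondgrade} $\bar P$-a.s.\ in the sense of distributions; the main points are (i) $\alpha_j(u^{\alpha_j}(t),\phi)\to 0$ because $\alpha_j\|u^{\alpha_j}\|\to 0$ in an appropriate averaged sense from the estimate $\alpha_j\,\bar{\mathbb{E}}\int_0^T\|u^{\alpha_j}\|^2\,ds\le C\alpha_j\to0$; (ii) the linear viscous term passes by the weak $L^2(0,T;\ve)$ convergence; (iii) the stochastic integral $\int_0^t(G,\phi)\,d\mathcal{W}^{\alpha_j}(s)\to\int_0^t(G,\phi)\,d\bar{\mathcal{W}}(s)$ by the uniform convergence of $\mathcal{W}^{\alpha_j}$ and a standard stochastic-integral convergence lemma; and (iv)---the hard part---the nonlinear term $(\curl(u^{\alpha_j}-\alpha_j\Delta u^{\alpha_j})\times u^{\alpha_j},\phi)$ must converge to $\langle v\cdot\nabla v,\phi\rangle$. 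Here one rewrites $\curl(u^{\alpha_j}-\alpha_j\Delta u^{\alpha_j})\times u^{\alpha_j} = \curl(u^{\alpha_j})\times u^{\alpha_j} - \alpha_j\,\curl(\Delta u^{\alpha_j})\times u^{\alpha_j}$, handles the first term via the strong $L^2(0,T;\h)$ convergence of $u^{\alpha_j}$ (from Skorokhod) together with the weak $\ve$-convergence, and estimates the second term using the product formulas of Theorem~\ref{product-formula} to show $\alpha_j\,\curl(\Delta u^{\alpha_j})\times u^{\alpha_j}\to 0$ in a negative-order Sobolev norm after testing against the smooth $\phi$; integrating by parts to move derivatives off $\Delta u^{\alpha_j}$ and pairing the resulting $\alpha_j$ against the a priori bounds is what makes this vanish. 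A uniform-integrability argument (from the $p>2$ moment bounds) upgrades $\bar P$-a.s.\ convergence of each term to convergence in $L^1(\bar\Omega)$, so the limiting identity \eqref{solve} holds $\bar P$-a.s.; checking measurability of $v(t)$ with respect to the filtration generated by $\bar{\mathcal{W}}$ and the null sets completes the proof that $(\bar\Omega,\bar{\mathcal{F}},\bar P,v,\bar{\mathcal{W}})$ is a weak probabilistic solution of \eqref{SNSE}. I expect step (iv), controlling the $\alpha_j\,\curl(\Delta u^{\alpha_j})\times u^{\alpha_j}$ contribution and simultaneously identifying the Navier--Stokes nonlinearity, to be the principal obstacle, precisely because the uniform estimates only give $u^{\alpha_j}$ in $L^2(0,T;\ve)$ and not in any space controlling $\Delta u^{\alpha_j}$ uniformly.
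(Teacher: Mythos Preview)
Your proposal is correct and follows essentially the same architecture as the paper: uniform-in-$\alpha$ energy estimates via It\^o's formula on $|u^\alpha|_{\ve}^2$, a fractional-in-time increment estimate to obtain tightness in $L^2(0,T;\h)$ via Theorem~\ref{comsimon}, Prokhorov--Skorokhod, and passage to the limit term by term.

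The one point where the paper proceeds differently is your step~(iv). Rather than splitting $\curl(u^{\alpha_j}-\alpha_j\Delta u^{\alpha_j})\times u^{\alpha_j}$ as you suggest, the paper (following Iftimie) rewrites the full equation \eqref{secondgrade} algebraically so that the nonlinearity appears as the Navier--Stokes term $u^{\alpha}\!\cdot\!\nabla u^{\alpha}$ plus an explicit remainder
\[
R(u^{\alpha})=\alpha\sum_{i,k}\mathbb{P}\!\left(\partial_i\partial_k(u_i^{\alpha}\partial_k u^{\alpha})+\partial_i(\partial_k u_i^{\alpha}\partial_k u^{\alpha})-\partial_k(\partial_k u_i^{\alpha}\nabla u_i^{\alpha})\right),
\]
with the extra gradient absorbed into a modified pressure. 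This form is tailored to Theorem~\ref{product-formula}: each factor inside $R$ is a product of \emph{first-order} derivatives of $u^\alpha$, so the product estimates give $|R(u^\alpha)|_{\h^{-4}}\le C\alpha\|u^\alpha\|^2$ directly, and the a priori bound $\alpha\|u^\alpha\|^2\le |u^\alpha|_{\ve}^2$ forces $R(u^{\alpha_j})\to 0$ in $L^2(\bar\Omega;L^2(0,T;\h^{-4}))$. The same rewriting is what drives the $\h^{-4}$ increment estimate (Lemma~\ref{finite-diff}). Your decomposition would also work after the integration by parts you indicate, but note that a \emph{direct} application of Theorem~\ref{product-formula} to $\alpha_j\,\curl(\Delta u^{\alpha_j})\times u^{\alpha_j}$ fails (three derivatives on one factor violates the hypothesis $\beta+\gamma>0$), so the integration by parts is not optional; once carried out you essentially reproduce the Iftimie form. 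Two minor omissions: the paper verifies explicitly that $\bar{\mathcal W}$ is an $\bar{\mathcal F}^t$-Wiener process on the new space, and it takes $\bar{\mathcal F}^t$ to be generated by $(\bar{\mathcal W}(s),v(s))_{s\le t}$ together with the null sets, not by $\bar{\mathcal W}$ alone.
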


\begin{Rem}
\label{strong} Since we are in 2-D then it is known that under our
hypotheses (I)-(III) the problem (\ref{SNSE}) has a strong probabilistic
solution which is unique, see for example \cite{Menaldi}. This implies that
the process $v$ of the above theorem is a strong probabilistic solution of
the Stochastic Navier Stokes Equations (\ref{SNSE}).
\end{Rem}

The convergence of the whole sequence $\left( u^{\alpha }\right) $ to the
(either strong or weak probabilistic) solution of the stochastic
Navier-Stokes equations could not be proved in \cite{PAUL3}. The second main
goal of this work is to prove the following convergence result.

\begin{thm}
\label{CONVPROB} For the given stochastic system $(\Omega,\mathcal{F},P),%
\mathcal{F}^{t},\mathcal{W}$ defined in Introduction, we have that the whole
sequence $u^{\alpha}$ converges in probability to $v$ in the topology of $%
L^{2}(0,T;\mathbb{H})$, i.e $||u^{\alpha}-v||_{L^{2}(0,T;\mathbb{H}}$
converges to zero in probability. Here $v$ is the unique strong
probabilistic solution of the stochastic Navier-Stokes equations.
\end{thm}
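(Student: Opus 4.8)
The plan is to combine the first main result (Theorem \ref{Mainthm}), which gives convergence of subsequences in law together with a.s. convergence of the noises on an auxiliary probability space, with the uniqueness of the strong probabilistic solution $v$ of \eqref{SNSE} (Remark \ref{strong}) and the Gy\"ongy--Krylov-type lemma (Lemma \ref{CONVPROB1}) to upgrade this to convergence in probability of the \emph{whole} sequence on the \emph{original} stochastic basis. The key point of that lemma is the following: a sequence $X_n$ of random elements of a Polish space converges in probability to a fixed limit if and only if from every pair of subsequences one can extract a further subsequence along which the joint laws of the pairs $(X_{n_k},X_{m_k})$ converge weakly to a probability measure supported on the diagonal.

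First I would set up the pairing. Fix two arbitrary subsequences $(\alpha_{n})$ and $(\alpha_{m})$ of the original sequence $\alpha\to 0$, and consider the joint processes $(u^{\alpha_n},u^{\alpha_m})$ together with the single driving Wiener process $\mathcal{W}$. Using the $\alpha$-independent a priori estimates in $\mathbb{V}$ (and $\mathbb{H}$) together with the technical mean-type / finite-difference estimate of Lemma \ref{finite-diff}, I would verify tightness of the laws of the triple $(u^{\alpha_n},u^{\alpha_m},\mathcal{W})$ on an appropriate path space — the same space $L^{2}(0,T;\mathbb{H})$ (intersected with the weak-$*$ $L^\infty(0,T;\mathbb{H})$ and weak $L^2(0,T;\mathbb{V})$ topologies, and with $C([0,T];\mathbb{R}^m)$ for the noise) in which Theorem \ref{Mainthm} was proved. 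Prokhorov's theorem then yields a further subsequence along which these joint laws converge weakly, and Skorokhod's representation theorem produces, on some new probability space, processes $(\bar u^1,\bar u^2,\bar{\mathcal{W}})$ with a.s. convergence and with each marginal $(\bar u^i,\bar{\mathcal{W}})$ arising as a limit exactly as in the proof of Theorem \ref{Mainthm}. Hence, by that proof, $\bar u^1$ and $\bar u^2$ are each weak probabilistic solutions of \eqref{SNSE} relative to the \emph{same} Wiener process $\bar{\mathcal{W}}$ and the same initial datum $u_0$ on the same stochastic basis.

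Next comes the decisive use of uniqueness. Since we are in dimension two, pathwise uniqueness holds for \eqref{SNSE} (Remark \ref{strong} / \cite{Menaldi}), so $\bar u^1=\bar u^2$ almost surely as elements of $L^{2}(0,T;\mathbb{H})$. Therefore the weak limit of the joint laws of $(u^{\alpha_{n}},u^{\alpha_{m}})$ is supported on the diagonal $\{(x,y):x=y\}\subset L^2(0,T;\mathbb{H})\times L^2(0,T;\mathbb{H})$. Because the two original subsequences were arbitrary, the hypothesis of Lemma \ref{CONVPROB1} is met, and the lemma gives that the whole sequence $u^{\alpha}$ converges in probability, in $L^2(0,T;\mathbb{H})$, to a limit $\tilde v$ defined on the original space $(\Omega,\mathcal{F},P)$. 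A final identification step — passing to the limit in the variational formulation \eqref{secondgrade} along an a.s.-convergent subsequence, exactly as in Section 4 — shows that $\tilde v$ is a weak probabilistic solution of \eqref{SNSE} on $(\Omega,\mathcal{F},\mathcal{F}^t,P,\mathcal{W})$, hence by uniqueness $\tilde v=v$, the strong solution.

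I expect the main obstacle to be the tightness and identification for the \emph{coupled} system $(u^{\alpha_n},u^{\alpha_m})$: one must ensure the finite-difference estimate of Lemma \ref{finite-diff} and the energy estimates transfer verbatim to the pair and that, after the Skorokhod construction, both components are driven by a \emph{common} Wiener process so that 2-D pathwise uniqueness actually applies. In particular, care is needed that the highly nonlinear $\curl$-term is controlled uniformly in $\alpha$ for both members of the pair, and that the limiting processes inherit enough regularity ($L^2(0,T;\mathbb{V})$, weakly continuous $\mathbb{H}$-valued paths) to lie in the class where the uniqueness theorem is stated. Once these are in place, the Gy\"ongy-type argument closes the proof mechanically.
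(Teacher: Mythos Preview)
Your proposal is correct and follows essentially the same route as the paper's proof: tightness of the joint laws $(u^{\alpha_n},u^{\alpha_m},\mathcal{W})$ via the uniform estimates and Lemma~\ref{finite-diff}, Prokhorov--Skorokhod to obtain a.s.\ convergent triples on an auxiliary space with a common limiting Wiener process, identification of both limits as solutions of \eqref{SNSE} on the same stochastic basis, 2-D pathwise uniqueness to force the diagonal support, and then Lemma~\ref{CONVPROB1} to conclude convergence in probability on the original space with a final passage to the limit identifying the limit as the strong solution $v$. Your anticipated obstacles (common Wiener process after Skorokhod, uniform control of the $\curl$-term) are exactly the points the paper handles, and they are resolved precisely as you suggest.
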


\section{Proof of Theorem \protect\ref{Mainthm}}

This section is devoted to the proof of our first main result.

\subsection{Uniform a priori estimates}

In this subsection we derive some estimates uniform in $\alpha $. These
inequalities do not follow from previous works (see \cite{PAUL}) which
explode when $\alpha \rightarrow 0$. Throughout $C$ denotes unessential
positive constant independent of $\alpha $, and which may change from one
line to the next. Since we $\alpha \rightarrow 0$ we may assume that $\alpha
=\left( \alpha _{n}\right) \subset \lbrack 0,1)$, at least for large $n$ .

\begin{lem}
\label{lem5} For $\alpha \in (0,1)$ we have
\begin{align}
E\sup_{0\leq s\leq T}(|u^{\alpha }(s)|^{2}+\alpha ||u^{\alpha
}(s)||^{2})+E\int_{0}^{T}||u^{\alpha }(s)||^{2}ds& \leq C,  \label{lem51} \\
E\sup_{0\leq s\leq T}(|u^{\alpha }(s)|^{2}+\alpha ||u^{\alpha }(s)||^{2})^{%
\frac{p}{2}}+E\left( \int_{0}^{T}||u^{\alpha }(s)||^{2}ds\right) ^{\frac{p}{2%
}}& \leq C,  \label{lem52}
\end{align}%
for any $2\leq p<\infty $.
\end{lem}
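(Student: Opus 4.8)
The plan is to apply the It\^o formula to the functional $v\mapsto |v|_{\mathbb{V}}^{2}=|v|^{2}+\alpha\|v\|^{2}$ along the process $u^{\alpha}$. The key algebraic point is that the nonlinear curl-term drops out: testing \eqref{secondgrade} against $u^{\alpha}$ itself, one has $(\curl(u^{\alpha}-\alpha\Delta u^{\alpha})\times u^{\alpha},u^{\alpha})=0$ because $a\times b$ is orthogonal to $b$ pointwise, and the pressure term vanishes by $\Div u^{\alpha}=0$ together with the periodic boundary condition. Hence, writing $y^{\alpha}(t)=|u^{\alpha}(t)|^{2}+\alpha\|u^{\alpha}(t)\|^{2}$, we obtain $\bar P$-a.s.
\begin{equation*}
y^{\alpha}(t)+2\nu\int_{0}^{t}\|u^{\alpha}(s)\|^{2}\,ds
= y^{\alpha}(0)+2\int_{0}^{t}(F,u^{\alpha})\,ds
+2\int_{0}^{t}(G,u^{\alpha})\,d\mathcal{W}(s)
+\int_{0}^{t}|G|_{\mathbb{H}^{\otimes m}}^{2}\,ds,
\end{equation*}
where the last term is the It\^o correction (more precisely $\sum_{k}|G_{k}|^{2}$; since $G$ is $\mathbb{V}^{\otimes m}$-valued this is controlled by hypothesis (II)). Note $y^{\alpha}(0)=|u_{0}|^{2}+\alpha\|u_{0}\|^{2}\le |u_{0}|_{\mathbb{V}}^{2}<C$ uniformly in $\alpha$ by (III), which is exactly why the estimate does not blow up as $\alpha\to 0$.

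\textbf{Estimate \eqref{lem51}.} First I would take expectations in the identity above; the stochastic integral has zero mean (after a standard localization/stopping-time argument to justify integrability), so using Young's inequality $2(F,u^{\alpha})\le \nu^{-1}|F|^{2}+\nu|u^{\alpha}|^{2}$... actually it is cleaner to bound $2(F,u^{\alpha})\le |F|_{\mathbb{V}}^{2}+|u^{\alpha}|^{2}\le |F|_{\mathbb{V}}^{2}+y^{\alpha}$ and absorb nothing from the dissipation, giving
\begin{equation*}
E\,y^{\alpha}(t)+2\nu\,E\int_{0}^{t}\|u^{\alpha}(s)\|^{2}\,ds
\le C + \int_{0}^{t}\!\!\big(|F(s)|_{\mathbb{V}}^{2}+|G(s)|_{\mathbb{V}^{\otimes m}}^{2}\big)\,ds
+ E\int_{0}^{t} y^{\alpha}(s)\,ds,
\end{equation*}
and Gronwall's lemma (the bracketed forcing term is finite by (I)--(II) with $p=2$) yields $\sup_{t\le T}E\,y^{\alpha}(t)\le C$ and then $E\int_{0}^{T}\|u^{\alpha}\|^{2}\le C$, both uniformly in $\alpha$. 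To upgrade to the supremum inside the expectation, I return to the It\^o identity, take $\sup_{0\le s\le t}$ and then expectation, and handle the martingale term by the Burkholder--Davis--Gundy inequality:
\begin{equation*}
E\sup_{0\le s\le t}\Big|\int_{0}^{s}(G,u^{\alpha})\,d\mathcal{W}\Big|
\le C\,E\Big(\int_{0}^{t}|G|_{\mathbb{H}^{\otimes m}}^{2}\,y^{\alpha}(s)\,ds\Big)^{1/2}
\le \tfrac12 E\sup_{s\le t}y^{\alpha}(s) + C\,E\int_{0}^{t}|G|_{\mathbb{H}^{\otimes m}}^{2}\,ds,
\end{equation*}
absorbing the first term on the left and closing with Gronwall again.

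\textbf{Estimate \eqref{lem52}.} For the $p$-th moments I apply the It\^o formula a second time to $z\mapsto z^{p/2}$ with $z=y^{\alpha}$ (equivalently, apply It\^o directly to $|u^{\alpha}|_{\mathbb{V}}^{p}$). This produces a drift term $\tfrac{p}{2}(y^{\alpha})^{p/2-1}\big(-2\nu\|u^{\alpha}\|^{2}+2(F,u^{\alpha})+|G|^{2}\big)$ plus the second-order term $\tfrac{p}{2}(\tfrac{p}{2}-1)(y^{\alpha})^{p/2-2}\cdot 4\sum_k(G_k,u^{\alpha})^{2}\le C(y^{\alpha})^{p/2-1}|G|_{\mathbb{H}^{\otimes m}}^{2}$. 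After taking $\sup_{s\le t}$, using BDG on the martingale part (which is now $\big(\int_0^t (y^\alpha)^{p-2}|G|^2 y^\alpha\,ds\big)^{1/2}\le \tfrac12\sup y^{p/2}+C\int|G|^2 (y^\alpha)^{p/2-1}$ type), and Young's inequality to dominate products like $(y^{\alpha})^{p/2-1}|F|_{\mathbb{V}}^{2}$ and $(y^{\alpha})^{p/2-1}|G|^{2}$ by $\varepsilon (y^{\alpha})^{p/2}+C_\varepsilon(|F|_{\mathbb{V}}^{p}+|G|^{p})$, Gronwall's inequality (using (I)--(II) for this exponent $p$) gives $E\sup_{t\le T}(y^{\alpha}(t))^{p/2}\le C$. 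The bound on $E\big(\int_0^T\|u^{\alpha}\|^2 ds\big)^{p/2}$ follows by collecting the retained dissipation terms, using $\big(\int_0^t(y^\alpha)^{p/2-1}\|u^\alpha\|^2\big)\le \sup(y^\alpha)^{p/2-1}\int_0^t\|u^\alpha\|^2$ is not quite enough, so instead one keeps $\int_0^T\|u^\alpha\|^2\,ds$ raised to the appropriate power directly from the $p=2$ step combined with the uniform moment bound on $\sup y^\alpha$ via H\"older, or more robustly re-runs the chain-rule argument retaining $\big(\nu\int_0^t\|u^\alpha\|^2\big)^{p/2}$ on the left from the start.

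\textbf{Main obstacle.} The computations are standard stochastic energy estimates; the only genuinely delicate points are (i) making sure every constant is traced to be independent of $\alpha$ — this hinges entirely on $y^{\alpha}(0)=|u_{0}|_{\mathbb{V}}^{2}$ being bounded uniformly (hypothesis (III)) and on the curl-term and pressure term vanishing identically, so that $\alpha$ never appears with a negative power in the estimate; and (ii) the usual technicality of justifying that the It\^o formula applies and that the stochastic integrals are true martingales, which is handled by a localization argument via stopping times $\tau_{N}=\inf\{t:\,y^{\alpha}(t)\ge N\}$, deriving the bound on $[0,t\wedge\tau_N]$ and letting $N\to\infty$ by monotone convergence — legitimate because the solution already lies in $L^{p}(\Omega;L^{\infty}(0,T;\mathbb{V}))$ by the existence theorem, so $\tau_N\to T$ a.s.
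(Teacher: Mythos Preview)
Your proposal is correct and follows essentially the same route as the paper: It\^o's formula applied to $|u^{\alpha}|_{\mathbb{V}}^{2}$ (and then to $|u^{\alpha}|_{\mathbb{V}}^{p}$), the vanishing of the curl nonlinearity when tested against $u^{\alpha}$, BDG for the martingale term, and Gronwall. Two minor points where the paper is slightly cleaner: the It\^o correction is $|\widehat{G}|_{\mathbb{V}^{\otimes m}}^{2}$ with $\widehat{G}=(I+\alpha A)^{-1}G$ (bounded by $C|G|^{2}$ via Remark~\ref{solo}, so your version is harmless), and for $E\big(\int_{0}^{T}\|u^{\alpha}\|^{2}\big)^{p/2}$ the paper simply returns to the $p=2$ differential inequality \eqref{lem53}, raises it to the power $p/2$, and invokes the already-established bound $E\sup|u^{\alpha}|_{\mathbb{V}}^{p}\le C$ together with the martingale inequality---which is exactly the ``re-run'' you sketch at the end.
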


Before we prove this result it is important to make the following remark.

\begin{Rem}
\label{solo} We recall that the continuous linear operator $(I+\alpha A)^{-1}
$, where $A$ is the usual Stokes operator, establishes a bijective
correspondence between the spaces $\mathbb{H}^{l}(D)\cap\mathbb{V}$ (resp. $%
\mathbb{H}$) and $\mathbb{H}^{l+2}(D)\cap\mathbb{V}$, $l>1$ (resp. $l=0$).
Furthermore for any $w\in\mathbb{V}$, and $f\in\mathbb{H}^{l}(D)$, $l\ge0$,
\begin{align}
((I+\alpha A)^{-1}f,w)_{\mathbb{V}} & =(f,w),  \label{relation} \\
|(I+\alpha A)^{-1}f|_{\mathbb{V}} & \le C|f|  \label{related}
\end{align}
\end{Rem}

\begin{proof}
We start the proof of our lemma by proving (\ref{lem51}). Since $u^{\alpha }$
is a solution of (\ref{secondgrade}) then
\begin{equation*}
du^{\alpha }+(I+\alpha A)^{-1}Au^{\alpha }dt+(I+\alpha A)^{-1}\widehat{B}%
(u^{\alpha },u^{\alpha })dt=(I+\alpha A)^{-1}Fdt+(I+\alpha A)^{-1}Gd\mathcal{%
W},
\end{equation*}%
holds $P$-a.s. for any $t\in \lbrack 0,T]$. Here we have set
\begin{equation*}
\widehat{B}(u^{\alpha },u^{\alpha })=\curl(u^{\alpha }-\alpha \Delta
u^{\alpha })\times u^{\alpha }.
\end{equation*}%
For the rest of this section and the paper we write
\begin{align*}
(I+\alpha A)^{-1}F& =\widehat{F}, \\
(I+\alpha A)^{-1}G& =\widehat{G}.
\end{align*}%
Ito's formula implies that
\begin{equation*}
\begin{split}
& d|u^{\alpha }|_{\mathbb{V}}^{2}+2((I+\alpha A)^{-1}Au^{\alpha },u^{\alpha
})_{\mathbb{V}}dt+2((I+\alpha A)^{-1}\widehat{B}(u^{\alpha },u^{\alpha
}),u^{\alpha })_{\mathbb{V}}dt \\
& =(\widehat{F},u^{\alpha })_{\mathbb{V}}dt+|\widehat{G}|_{\mathbb{V}%
^{\otimes m}}^{2}dt+2(\widehat{G},u^{\alpha })_{\mathbb{V}}d\mathcal{W}.
\end{split}%
\end{equation*}%
By the relationship (\ref{relation}) in the above remark and the equation
\begin{equation*}
(\widehat{B}(u^{\alpha },u^{\alpha }),u^{\alpha })=0,
\end{equation*}%
we obtain that
\begin{equation*}
d|u^{\alpha }|_{\mathbb{V}}^{2}+2||u^{\alpha }||^{2}dt=2(F,u^{\alpha })dt+|%
\widehat{G}|_{\mathbb{V}^{\otimes m}}^{2}dt+2(\widehat{G},u^{\alpha })_{%
\mathbb{V}}d\mathcal{W}.
\end{equation*}%
This relation combined with Cauchy-Schwarz's inequality and (\ref{related})
imply that
\begin{equation*}
d|u^{\alpha }|_{\mathbb{V}}^{2}+2||u^{\alpha }||^{2}dt\leq
(|F|^{2}+|u^{\alpha }|^{2})dt+|\widehat{G}|_{\mathbb{V}^{\otimes m}}^{2}dt+2(%
\widehat{G},u^{\alpha })_{\mathbb{V}}d\mathcal{W}.
\end{equation*}%
Recalling the definition of $|.|_{\mathbb{V}}^{2}$ we deduce that
\begin{equation}
d|u^{\alpha }|_{\mathbb{V}}^{2}+2||u^{\alpha }||^{2}dt\leq |F|_{\mathbb{V}%
}^{2}+|u^{\alpha }|_{\mathbb{V}}^{2}dt+|\widehat{G}|_{\mathbb{V}^{\otimes
m}}^{2}dt+2(\widehat{G},u^{\alpha })_{\mathbb{V}}d\mathcal{W}.  \label{lem53}
\end{equation}%
Taking the $\sup $ over $0\leq s\leq t$, $t\in \lbrack 0,T]$ and passing to
the mathematical expectation yield
\begin{equation*}
E\sup_{0\leq s\leq t}|u^{\alpha }|_{\mathbb{V}}^{2}+2E\int_{0}^{t}||u^{%
\alpha }||^{2}ds\leq C+E\int_{0}^{t}|u^{\alpha }|_{\mathbb{V}%
}^{2}ds+2E\sup_{0\leq s\leq t}\left\vert \int_{0}^{s}(\widehat{G},u^{\alpha
})_{\mathbb{V}}d\mathcal{W}\right\vert ,
\end{equation*}%
where the assumptions on $F$ and $G$ were used. Burkhölder-Davis-Gundy's
inequality implies
\begin{equation*}
E\sup_{0\leq s\leq t}|u^{\alpha }|_{\mathbb{V}}^{2}+2E\int_{0}^{t}||u^{%
\alpha }||^{2}ds\leq C+E\int_{0}^{t}|u^{\alpha }|_{\mathbb{V}%
}^{2}ds+6E\left( \int_{0}^{t}(\widehat{G},u^{\alpha })_{\mathbb{V}%
}^{2}ds\right) ^{\frac{1}{2}}.
\end{equation*}%
Cauchy's inequality implies
\begin{equation*}
E\sup_{0\leq s\leq t}|u^{\alpha }|_{\mathbb{V}}^{2}+2E\int_{0}^{t}||u^{%
\alpha }||^{2}ds\leq C+E\int_{0}^{t}|u^{\alpha }|_{\mathbb{V}}^{2}ds+\frac{1%
}{2}E\sup_{0\leq s\leq t}|u^{\alpha }(s)|_{\mathbb{V}}^{2}+CE\int_{0}^{t}|%
\widehat{G}|_{\mathbb{V}^{\otimes m}}^{2}ds,
\end{equation*}%
or
\begin{equation*}
E\sup_{0\leq s\leq t}|u^{\alpha }|_{\mathbb{V}}^{2}+4E\int_{0}^{t}||u^{%
\alpha }||^{2}ds\leq C+CE\int_{0}^{t}|u^{\alpha }|_{\mathbb{V}}^{2}ds.
\end{equation*}%
Here we have used (\ref{related}) and the assumption on $G$. It follows from
Gronwall's inequality that
\begin{equation*}
E\sup_{0\leq s\leq t}|u^{\alpha }|_{\mathbb{V}}^{2}+2E\int_{0}^{t}||u^{%
\alpha }||^{2}ds<C,
\end{equation*}%
for any $t\in \lbrack 0,T]$. This completes the proof of (\ref{lem51}).

We continue with the proof of (\ref{lem52}). For $2\leq p<\infty $ and $t\in
\lbrack 0,T]$ the following holds:
\begin{equation*}
\begin{split}
|u^{\alpha }|_{\mathbb{V}}^{p}+p\int_{0}^{t}|u^{\alpha }|_{\mathbb{V}%
}^{p-2}||u^{\alpha }||^{2}ds& =|u_{0}|_{\mathbb{V}}^{p}+p\int_{0}^{t}|u^{%
\alpha }|_{\mathbb{V}}^{p-2}(\widehat{F},u^{\alpha })ds+\frac{p}{2}%
\int_{0}^{t}|u^{\alpha }|_{\mathbb{V}}^{p-2}|\widehat{G}|_{\mathbb{V}}^{2}ds
\\
& +\frac{(p-2)p}{2}\int_{0}^{t}|u^{\alpha }|_{\mathbb{V}}^{p-4}(\widehat{G}%
,u^{\alpha })_{\mathbb{V}}ds+p\int_{0}^{t}|u^{\alpha }|_{\mathbb{V}}^{p-2}(%
\widehat{G},u^{\alpha })_{\mathbb{V}}d\mathcal{W}.
\end{split}%
\end{equation*}%
Owing to (\ref{related}) and the above estimate we see that
\begin{equation*}
\begin{split}
|u^{\alpha }|_{\mathbb{V}}^{p}& \leq |u_{0}|_{\mathbb{V}}^{p}+p%
\int_{0}^{t}|u^{\alpha }|_{\mathbb{V}}^{p-1}|F|ds+\frac{p}{2}%
C\int_{0}^{t}|u^{\alpha }|_{\mathbb{V}}^{p-2}|G|^{2}ds+\frac{(p-2)p}{2}%
C\int_{0}^{t}|u^{\alpha }|_{\mathbb{V}}^{p-2}|G|^{2}ds \\
& +p\int_{0}^{t}|u^{\alpha }|_{\mathbb{V}}^{p-2}(\widehat{G},u^{\alpha })_{%
\mathbb{V}}d\mathcal{W}.
\end{split}%
\end{equation*}%
We derive from this by using Young's inequality that
\begin{equation*}
|u^{\alpha }|_{\mathbb{V}}^{p}\leq |u_{0}|_{\mathbb{V}}^{p}+C%
\int_{0}^{t}|u^{\alpha }|_{\mathbb{V}}^{p}ds+C\int_{0}^{t}|F|^{p}ds+C%
\int_{0}^{t}|G|^{p}ds+p\int_{0}^{t}|u^{\alpha }|_{\mathbb{V}}^{p-2}(\widehat{%
G},u^{\alpha })_{\mathbb{V}}d\mathcal{W}.
\end{equation*}%
Taking the $\sup $ over $0\leq s\leq t$, passing to the mathematical
expectation and using the assumptions on $F$ and $G$ imply
\begin{equation*}
E\sup_{0\leq s\leq t}|u^{\alpha }|_{\mathbb{V}}^{p}\leq |u_{0}|_{\mathbb{V}%
}^{p}+CE\int_{0}^{t}|u^{\alpha }|_{\mathbb{V}}^{p}ds+pE\sup_{0\leq s\leq
t}\left\vert \int_{0}^{s}|u^{\alpha }|_{\mathbb{V}}^{p-2}(\widehat{G}%
,u^{\alpha })_{\mathbb{V}}d\mathcal{W}\right\vert .
\end{equation*}%
Invoking the Martingale inequality yields
\begin{equation*}
E\sup_{0\leq s\leq t}|u^{\alpha }|_{\mathbb{V}}^{p}\leq
CE\int_{0}^{t}|u^{\alpha }|_{\mathbb{V}}^{p}ds+pE\left(
\int_{0}^{s}t|u^{\alpha }|_{\mathbb{V}}^{2p-2}|\widehat{G}|_{\mathbb{V}%
}^{2}ds\right) ^{\frac{1}{2}}.
\end{equation*}%
We infer from this estimate, Young's inequality, (\ref{related}) along with
the assumption on $G$ and Gronwall's inequality that
\begin{equation}
E\sup_{0\leq s\leq t}|u^{\alpha }(s)|_{\mathbb{V}}^{p}<C,  \label{lem54}
\end{equation}%
for any $t\in \lbrack 0,T]$ and $2\leq p<\infty $. We deduce from (\ref%
{lem53}) with the help of this last estimate that
\begin{equation*}
E\left( \int_{0}^{t}||u^{\alpha }||^{2}ds\right) ^{\frac{p}{2}}\leq
C+CE\left\vert \int_{0}^{t}(\widehat{G},u^{\alpha })_{\mathbb{V}}d\mathcal{W}%
\right\vert ^{\frac{p}{2}}.
\end{equation*}%
We obtain from this with the help of the Martingale inequality and (\ref%
{lem54}) that
\begin{equation*}
E\left( \int_{0}^{t}||u^{\alpha }||^{2}ds\right) ^{\frac{p}{2}}\leq C.
\end{equation*}%
And this completes the proof of (\ref{lem52}), hence the lemma.
\end{proof}

\begin{Rem}
For $1\leq p<\infty $, the following estimates are valid
\begin{equation}
E\sup_{0\leq s\leq T}(|u^{\alpha }(s)|^{2}+\alpha ||u^{\alpha }(s)||^{2})^{%
\frac{p}{2}}+E\left( \int_{0}^{T}||u^{\alpha }(s)||^{2}ds\right) ^{\frac{p}{2%
}}<C,  \label{lem52vao}
\end{equation}
\end{Rem}

We will need the following key estimate.

\begin{lem}
\label{finite-diff} For any $\delta \in (0,1)$ we have
\begin{equation*}
E\sup_{|\theta |\leq \delta }\int_{0}^{T-\delta }|u^{\alpha }(t+\theta
)-u^{\alpha }(t)|_{\mathbb{H}^{-4}}^{2}\leq C\delta .
\end{equation*}
\end{lem}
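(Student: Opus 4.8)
The plan is to work directly from the abstract evolution form of \eqref{secondgrade}. Recall from the proof of Lemma~\ref{lem5} that, $P$-a.s.,
\begin{equation*}
u^{\alpha}(t+\theta)-u^{\alpha}(t)=-\int_{t}^{t+\theta}(I+\alpha A)^{-1}\bigl[Au^{\alpha}(s)+\widehat{B}(u^{\alpha}(s),u^{\alpha}(s))-F(s)\bigr]\,ds+\int_{t}^{t+\theta}\widehat{G}(s)\,d\mathcal{W}(s),
\end{equation*}
so that $|u^{\alpha}(t+\theta)-u^{\alpha}(t)|_{\mathbb{H}^{-4}}^{2}$ is bounded by a constant times the sum of the $\mathbb{H}^{-4}$-norms squared of the drift integral and the stochastic integral. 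I would estimate the two pieces separately, throughout using that $(I+\alpha A)^{-1}$ is a bounded operator on each $\mathbb{H}^{s}$ with norm $\le 1$ (indeed it gains two derivatives), so all the $\alpha$-dependent constants are harmless and the resulting bounds are uniform in $\alpha\in(0,1)$.

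For the drift term, by Cauchy--Schwarz in time (Jensen) one gets $\bigl|\int_{t}^{t+\theta}(\cdots)\,ds\bigr|_{\mathbb{H}^{-4}}^{2}\le \delta\int_{t}^{t+\theta}|(I+\alpha A)^{-1}[Au^{\alpha}+\widehat{B}(u^{\alpha},u^{\alpha})-F]|_{\mathbb{H}^{-4}}^{2}\,ds$. Now $|(I+\alpha A)^{-1}Au^{\alpha}|_{\mathbb{H}^{-4}}\le |Au^{\alpha}|_{\mathbb{H}^{-4}}\le C\|u^{\alpha}\|$ since $A$ maps $\mathbb{V}$ into $\mathbb{H}^{-1}\hookrightarrow\mathbb{H}^{-4}$, and $|(I+\alpha A)^{-1}F|_{\mathbb{H}^{-4}}\le|F|\le C$. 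The $\curl$-term is the crucial one: I would use Theorem~\ref{product-formula}. Write $\widehat{B}(u^{\alpha},u^{\alpha})=\curl(u^{\alpha}-\alpha\Delta u^{\alpha})\times u^{\alpha}$; since $u^{\alpha}-\alpha\Delta u^{\alpha}=(I+\alpha A)^{-1}$-preimage lies in $\mathbb{H}^{1}$ uniformly (its $\curl$ is in $L^2$ by membership in $\mathbb{W}$, but its norm is only controlled via $|u^{\alpha}|_{\mathbb{V}}$, hence $\curl(u^{\alpha}-\alpha\Delta u^{\alpha})$ need not be uniformly bounded in $L^2$). Instead I apply the product estimate with $\gamma=1$, $\beta=-1$ (n=2): $\curl(u^{\alpha}-\alpha\Delta u^{\alpha})\in\mathbb{H}^{-2}$ with norm $\le C\|u^{\alpha}\|$ uniformly, and $u^{\alpha}\in\mathbb{H}^{1}$; the product formula then gives $\widehat{B}(u^{\alpha},u^{\alpha})\in\mathbb{H}^{-3}$ with $|\widehat{B}(u^{\alpha},u^{\alpha})|_{\mathbb{H}^{-3}}\le C\|u^{\alpha}\|^{2}$. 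Applying $(I+\alpha A)^{-1}$, which gains two derivatives, actually lands in $\mathbb{H}^{-1}\subset\mathbb{H}^{-4}$, and in any case $|(I+\alpha A)^{-1}\widehat{B}(u^{\alpha},u^{\alpha})|_{\mathbb{H}^{-4}}\le C\|u^{\alpha}\|^{2}$. Collecting, the drift contributes at most $C\delta\int_{t}^{t+\theta}(\|u^{\alpha}(s)\|^{2}+\|u^{\alpha}(s)\|^{4}+1)\,ds$, and after $\sup_{|\theta|\le\delta}$, $\int_{0}^{T-\delta}(\cdot)\,dt$ and taking expectations one obtains a bound $C\delta\,E\bigl(1+\int_{0}^{T}\|u^{\alpha}(s)\|^{4}\,ds\bigr)$; but $\int_0^T\|u^\alpha\|^4$ is not directly controlled, so here I would instead first take $\sup_{|\theta|\le\delta}$ inside, bound $\int_t^{t+\theta}\le\int_0^T$ crudely only in $\theta$, i.e. write $\sup_{|\theta|\le\delta}\int_{t}^{t+\theta}(\cdots)ds\le\int_{t-\delta}^{t+\delta}(\cdots)ds$ and use Fubini: $\int_0^{T-\delta}\int_{t-\delta}^{t+\delta}g(s)\,ds\,dt\le 2\delta\int_0^T g(s)\,ds$. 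This yields $C\delta^{2}\,E\int_0^T(\|u^\alpha\|^2+\|u^\alpha\|^4+1)\,ds\le C\delta^2$ provided $E\int_0^T\|u^\alpha\|^4<\infty$; this last bound does follow from interpolation $\|u^\alpha\|^4\le C|u^\alpha|_{L^\infty_t\mathbb{H}}^{2}\cdot(\text{something})$—more cleanly, from \eqref{lem52vao} with a large $p$ one controls $E(\int_0^T\|u^\alpha\|^2)^{p/2}$, which by Hölder dominates $E\int_0^T\|u^\alpha\|^4$. Either way the drift part is $O(\delta^2)\le O(\delta)$.

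For the stochastic integral, set $M_\theta=\int_{t}^{t+\theta}\widehat{G}(s)\,d\mathcal{W}(s)$. By the Itô isometry in the Hilbert space $\mathbb{H}^{-4}$ (applied after the supremum is handled via Doob, or directly bounding $E\sup_{|\theta|\le\delta}$ by a BDG-type argument in the two one-sided increments),
\begin{equation*}
E\sup_{|\theta|\le\delta}\Bigl|\int_{t}^{t+\theta}\widehat{G}(s)\,d\mathcal{W}(s)\Bigr|_{\mathbb{H}^{-4}}^{2}\le C\,E\int_{(t-\delta)\vee 0}^{(t+\delta)\wedge T}|\widehat{G}(s)|_{\mathbb{H}^{-4}\otimes m}^{2}\,ds\le C\int_{(t-\delta)\vee 0}^{(t+\delta)\wedge T}|G(s)|_{\mathbb{V}^{\otimes m}}^{2}\,ds,
\end{equation*}
using $|\widehat{G}|_{\mathbb{H}^{-4}}\le|G|\le C|G|_{\mathbb{V}}$. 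Integrating in $t$ over $(0,T-\delta)$ and using Fubini as above gives $C\delta\int_0^T|G(s)|_{\mathbb{V}^{\otimes m}}^{2}\,ds\le C\delta$ by hypothesis~(II). Adding the drift and noise contributions yields $E\sup_{|\theta|\le\delta}\int_0^{T-\delta}|u^\alpha(t+\theta)-u^\alpha(t)|_{\mathbb{H}^{-4}}^{2}\,dt\le C\delta$, with $C$ independent of $\alpha$.

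\textbf{Main obstacle.} The delicate point is the nonlinear $\curl$-term: one cannot afford to put $\curl(u^{\alpha}-\alpha\Delta u^{\alpha})$ in $L^{2}$ (that norm blows up as $\alpha\to0$), so one must deliberately work in negative Sobolev spaces and invoke the product estimate Theorem~\ref{product-formula} with the split $\gamma=1$, $\beta=-1$ to absorb two derivatives into the dual space, losing three derivatives overall but staying within $\mathbb{H}^{-3}\subset\mathbb{H}^{-4}$ with a bound $\|u^{\alpha}\|^{2}$ that is uniform in $\alpha$; the choice of the exponent $-4$ in the statement is exactly what gives the needed room (with the extra regularization from $(I+\alpha A)^{-1}$ to spare). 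A secondary technical care is to handle the $\sup_{|\theta|\le\delta}$ before taking expectations so that Fubini produces the single factor of $\delta$ rather than requiring an unavailable $L^{4}_{t}$ bound without the square-integral estimate from \eqref{lem52vao}.
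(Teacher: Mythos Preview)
Your outline follows the right architecture (split into drift and stochastic parts, use BDG/Fubini for the noise), but the treatment of the nonlinear $\curl$-term has a genuine gap, and it is precisely the point where the paper's proof does real work.

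First, the product estimate you invoke does not apply. With $\gamma=1$, $\beta=-1$ one has $\gamma+\beta=0$, violating the hypothesis $\gamma+\beta>0$ of Theorem~\ref{product-formula}; and even taking $\gamma=1-\varepsilon$, you would need $\curl(u^{\alpha}-\alpha\Delta u^{\alpha})\in\mathbb{H}^{-1+\varepsilon}$, whereas the $\alpha\Delta u^{\alpha}$ contribution only places it in $\mathbb{H}^{-2}$ (uniformly in $\alpha$). So the claimed bound $|\widehat{B}(u^{\alpha},u^{\alpha})|_{\mathbb{H}^{-3}}\le C\|u^{\alpha}\|^{2}$ is not justified by the stated tools. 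Second, even granting such a bound, the drift would produce $E\int_{0}^{T}\|u^{\alpha}\|^{4}\,ds$, and your justification via ``H\"older from \eqref{lem52vao}'' fails: the inequality goes the wrong way, since $\bigl(\int_{0}^{T}\|u^{\alpha}\|^{2}\bigr)^{2}\le T\int_{0}^{T}\|u^{\alpha}\|^{4}$, not the reverse. In fact Lemma~\ref{lem5} gives only $E\sup_{t}\alpha\|u^{\alpha}\|^{2}\le C$ and $E\bigl(\int_{0}^{T}\|u^{\alpha}\|^{2}\bigr)^{p/2}\le C$, from which $E\int_{0}^{T}\|u^{\alpha}\|^{4}$ is \emph{not} uniformly bounded as $\alpha\to0$.

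The paper resolves both issues simultaneously by first rewriting the equation using Iftimie's identity \eqref{secondgrade3}: the nonlinearity becomes $u^{\alpha}\!\cdot\!\nabla u^{\alpha}$ plus terms of the form $\alpha\,\partial^{a}(\partial u^{\alpha}\,\partial u^{\alpha})$ and $\alpha\,\partial^{a}(u^{\alpha}\,\partial u^{\alpha})$, with all higher derivatives placed \emph{outside} the products. Each product is then of type $L^{2}\cdot L^{2}$ or $L^{2}\cdot \mathbb{H}^{1}$, which Theorem~\ref{product-formula} (or the embedding $L^{1}\hookrightarrow\mathbb{H}^{-1-\varepsilon}$) handles in $\mathbb{H}^{-2}$ or better; the outer derivatives are absorbed by working in $\mathbb{H}^{-4}$. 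Crucially, the quadratic-in-$\nabla u^{\alpha}$ terms carry an explicit factor~$\alpha$, so their squared $\mathbb{H}^{-4}$ norm is $\le C\,\alpha\,(\alpha|\nabla u^{\alpha}|^{2})\,|\nabla u^{\alpha}|^{2}$, and one closes using $E\bigl[\sup_{t}\alpha\|u^{\alpha}\|^{2}\cdot\int_{0}^{T}\|u^{\alpha}\|^{2}\bigr]\le C$ from Lemma~\ref{lem5} (with $p=4$ and Cauchy--Schwarz in~$\omega$). Finally the paper works with $\Phi=\mathbb{P}(u^{\alpha}-\alpha\Delta u^{\alpha})$ and transfers the estimate to $u^{\alpha}$ via the spectral inequality $|u^{\alpha}(t+\theta)-u^{\alpha}(t)|_{\mathbb{H}^{-4}}\le|\Phi(t+\theta)-\Phi(t)|_{\mathbb{H}^{-4}}$. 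The missing ingredient in your proposal is precisely this decomposition of $\widehat{B}$ that isolates the $\alpha$ next to the $|\nabla u^{\alpha}|^{4}$-type terms.
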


\begin{proof}
In what follows we set
\begin{equation*}
\frac{\partial }{\partial x_{i}}=\partial _{i},\text{ for any }i,
\end{equation*}%
and we rewrite the first equation in (\ref{secondgrade}) as follows (see
\cite{IFTIMIE} for the details)
\begin{equation}
\begin{split}
& \frac{\partial }{\partial t}(u^{\alpha }-\alpha \Delta u^{\alpha })-\nu
\Delta u^{\alpha }+u^{\alpha }.\nabla u^{\alpha }-\alpha \sum_{j,k}\partial
_{j}\partial _{k}(u_{j}^{\alpha }\partial _{k}u^{\alpha })+\alpha
\sum_{j,k}\partial _{j}(\partial _{k}u_{j}^{\alpha }\partial _{k}u^{\alpha })
\\
& =\alpha \sum_{j,k}\partial _{k}(\partial _{k}u_{j}^{\alpha }\nabla
u_{j}^{\alpha })-\nabla \mathfrak{P}^{\sharp }+F+G\frac{d\mathcal{W}}{dt},
\end{split}
\label{secondgrade3}
\end{equation}%
where
\begin{equation*}
\nabla \mathfrak{P}^{\sharp }=\frac{1}{2}\nabla (|u^{\alpha }|^{2}+\alpha
|\nabla u^{\alpha }|^{2})+\nabla \mathfrak{P}.
\end{equation*}%
We set $\Phi =\mathbb{P}(u^{\alpha }-\alpha \Delta u^{\alpha })$ where $%
\mathbb{P}$ is the Leray projector. We see from (\ref{secondgrade3}) that
\begin{equation*}
\begin{split}
& d\Phi +\{\nu Au^{\alpha }+\mathbb{P}(u^{\alpha }.\nabla u^{\alpha
})-\alpha \sum_{j,k}\mathbb{P}(\partial _{j}\partial _{k}(u_{j}^{\alpha
}\partial _{k}u^{\alpha }))+\alpha \sum_{j,k}\mathbb{P}(\partial
_{j}(\partial _{k}u_{j}^{\alpha }\partial _{k}u^{\alpha }))\}dt \\
& =\alpha \sum_{j,k}\mathbb{P}(\partial _{k}(\partial _{k}u_{j}^{\alpha
}\nabla u_{j}^{\alpha }))dt+Fdt+Gd\mathcal{W}.
\end{split}%
\end{equation*}%
This implies
\begin{equation*}
\begin{split}
\Phi (t+\theta )-\Phi (t)& =\int_{t}^{t+\theta }\{-\nu Au^{\alpha }+\alpha
\sum_{j,k}\left( \mathbb{P}(\partial _{j}\partial _{k}(u_{j}^{\alpha
}\partial _{k}u^{\alpha }))-\mathbb{P}(\partial _{j}(\partial
_{k}u_{j}^{\alpha }\partial _{k}u^{\alpha }))\right) \}ds \\
& -\int_{t}^{t+\theta }\mathbb{P}(u^{\alpha }.\nabla u^{\alpha
})ds+\int_{t}^{t+\theta }\{\alpha \sum_{j,k}\mathbb{P}(\partial
_{k}(\partial _{k}u_{j}^{\alpha }\nabla u_{j}^{\alpha
}))+F\}ds+\int_{t}^{t\theta }Gd\mathcal{W},
\end{split}%
\end{equation*}%
for any $\theta >0$. We infer from this that
\begin{equation*}
\begin{split}
|\Phi (t+\theta )-\Phi (t)|_{\mathbb{H}^{-4}}^{2}& \leq 2\left(
\int_{t}^{t+\theta }\left\{ +|u^{\alpha }.\nabla u^{\alpha }|_{\mathbb{H}%
^{-4}}+\alpha \sum_{j,k}|\partial _{j}\partial _{k}(u_{j}^{\alpha }\partial
_{k}u^{\alpha })|_{\mathbb{H}^{-4}}\right\} ds\right) ^{2} \\
& +4\left( \int_{t}^{t+\theta }\left\{ \alpha \sum_{j,k}[|\partial
_{j}(\partial _{k}u_{j}^{\alpha }\partial _{k}u^{\alpha })|_{\mathbb{H}%
^{-4}}+|\partial _{k}(\partial _{k}u_{j}^{\alpha }\nabla u_{j}^{\alpha })|_{%
\mathbb{H}^{-4}}]+|F|_{\mathbb{H}^{-4}}\right\} dst\right) ^{2} \\
& +2\int_{t}^{t+\theta }\nu |Au^{\alpha }|_{\mathbb{H}^{-4}}ds+2\left\vert
\int_{t}^{t+\theta }Gd\mathcal{W}\right\vert _{\mathbb{H}^{-4}}^{2},
\end{split}%
\end{equation*}%
which implies
\begin{equation*}
\begin{split}
|\Phi (t+\theta )-\Phi (t)|_{\mathbb{H}^{-4}}^{2}& \leq C\theta
\int_{t}^{t+\theta }\left\{ +|u^{\alpha }.\nabla u^{\alpha }|_{\mathbb{H}%
^{-4}}^{2}+\alpha ^{2}\sum_{j,k}|\partial _{j}\partial _{k}(u_{j}^{\alpha
}\partial _{k}u^{\alpha })|_{\mathbb{H}^{-4}}^{2}\right\} dt \\
& +C\theta \int_{t}^{t+\theta }\left\{ \alpha ^{2}\sum_{j,k}[|\partial
_{j}(\partial _{k}u_{j}^{\alpha }\partial _{k}u^{\alpha })|_{\mathbb{H}%
^{-4}}^{2}+|\partial _{k}(\partial _{k}u_{j}^{\alpha }\nabla u_{j}^{\alpha
})|_{\mathbb{H}^{-4}}^{2}]+|F|^{2}\right\} dt \\
& +C\theta \int_{t}^{t+\theta }\nu |Au^{\alpha }|_{\mathbb{H}%
^{-4}}^{2}ds+2\left\vert \int_{t}^{t+\theta }Gd\mathcal{W}\right\vert _{%
\mathbb{H}^{-4}}^{2}.
\end{split}%
\end{equation*}%
It is not hard to see that
\begin{equation}
|Au^{\alpha }|_{\mathbb{H}^{-4}}^{2}\leq C|u^{\alpha }|^{2}.  \label{i}
\end{equation}%
For $n=2$ Theorem \ref{product-formula} implies that
\begin{align}
|u^{\alpha }.\nabla u^{\alpha }|_{\mathbb{H}^{-4}}^{2}& \leq C|u^{\alpha
}|^{2}|\nabla u^{\alpha }|^{2},  \label{ii} \\
\alpha ^{2}|\partial _{j}\partial _{k}(u_{j}^{\alpha }\partial _{k}u^{\alpha
})|_{\mathbb{H}^{-4}}^{2}& \leq C|u_{j}^{\alpha }\partial _{k}u^{\alpha }|_{%
\mathbb{H}^{-2}}^{2}\leq C|u^{\alpha }|^{2}|\nabla u^{\alpha }|^{2}.
\label{iii}
\end{align}%
From the same theorem we have that
\begin{equation*}
|\partial _{k}u_{j}^{\alpha }\partial _{k}u^{\alpha }|_{\mathbb{H}^{-2}}\leq
C|\nabla u^{\alpha }|^{2}\quad \forall k,j,
\end{equation*}%
from which we derive that
\begin{equation}
\alpha ^{2}|\partial _{j}(\partial _{k}u_{j}^{\alpha }\partial _{k}u^{\alpha
})|_{\mathbb{H}^{-4}}^{2}\leq \alpha C\alpha |\nabla u^{\alpha }|^{2}|\nabla
u^{\alpha }|^{2}.  \label{iv}
\end{equation}%
A similar argument can be used to show that
\begin{equation}
\alpha ^{2}|\partial _{k}(\partial _{k}u_{j}^{\alpha }\nabla u_{j}^{\alpha
})|_{\mathbb{H}^{-4}}^{2}\leq \alpha C\alpha |\nabla u^{\alpha }|^{2}|\nabla
u^{\alpha }|^{2}.  \label{v}
\end{equation}%
The estimates (\ref{i})-(\ref{v}) along with (\ref{lem52}) allow us to write
\begin{equation*}
\begin{split}
E\int_{0}^{T-\delta }\sup_{0\leq \theta \leq \delta }|\Phi (t+\theta )-\Phi
(t)|_{\mathbb{H}^{-4}}^{2}dt& \leq C\delta ^{2}+C\delta +C\delta
E\int_{0}^{T-\delta }\int_{t}^{t+\theta }\alpha |\nabla u^{\alpha
}|^{2}|\nabla u^{\alpha }|^{2}dsdt \\
& +CE\int_{0}^{T-\theta }\sup_{0\leq \theta \leq \delta }\left\vert
\int_{t}^{t+\theta }Gd\mathcal{W}\right\vert _{\mathbb{H}^{-4}}^{2}dt.
\end{split}%
\end{equation*}%
But (\ref{lem52}) implies that
\begin{equation*}
E\sup_{0\leq t\leq T}\alpha ^{\frac{p}{2}}|\nabla u^{\alpha }(t)|^{p}+\nu
E\left( \int_{0}^{T}|\nabla u^{\alpha }(t)|^{2}dt\right) ^{\frac{p}{2}}\leq
C,\,\,2\leq p<\infty .
\end{equation*}%
From which we deduce that
\begin{equation*}
E\int_{0}^{T-\delta }\sup_{0\leq \theta \leq \delta }|\Phi (t+\theta )-\Phi
(t)|_{\mathbb{H}^{-4}}^{2}dt\leq C\delta ^{2}+C\delta +C\delta
+CE\int_{0}^{T-\delta }\sup_{0\leq \theta \leq \delta }\left\vert
\int_{t}^{t+\theta }Gd\mathcal{W}\right\vert _{\mathbb{H}^{-4}}^{2}dt.
\end{equation*}%
By making use of the Martingale inequality, the assumption on $G$ we obtain
that
\begin{equation*}
E\int_{0}^{T-\delta }\sup_{0\leq \theta \leq \delta }|\Phi (t+\theta )-\Phi
(t)|_{\mathbb{H}^{-4}}^{2}dt\leq C\delta .
\end{equation*}%
For almost all $(t,\omega )\in \lbrack 0,T]\times \Omega $ we have
\begin{equation*}
u^{\alpha }(t+\theta )-u^{\alpha }(t)=(I+\alpha A)^{-1}(\Phi (t+\theta
)-\Phi (t)),
\end{equation*}%
which implies that
\begin{equation*}
|u^{\alpha }(t+\theta )-u^{\alpha }(t)|_{\mathbb{H}^{\beta }}^{2}<|\Phi
(t+\theta )-\Phi (t)|_{\mathbb{H}^{\beta }}^{2},\forall \beta \in \mathbb{R}.
\end{equation*}%
Indeed for any $\phi \in \mathbb{H}^{\beta }(D)$ such that $\Div\phi =0$ and
$\int_{D}\phi (x)dx=0$ we have
\begin{equation*}
|\phi |_{\mathbb{H}^{\beta }}^{2}=\sum_{j=1}^{\infty }|\phi _{j}|^{2}\lambda
_{j}^{2\beta }<\sum_{j=1}^{\infty }(1+\alpha \lambda _{j})|\phi
_{j}|^{2}\lambda _{j}^{2\beta },
\end{equation*}%
that is,
\begin{equation*}
|\phi |_{\mathbb{H}^{\beta }}^{2}<|\phi +\alpha A\phi |_{\mathbb{H}^{\beta
}}^{2},
\end{equation*}%
where $\phi =\sum_{j=1}^{\infty }\phi _{j}e_{j}$, and $Ae_{j}=\lambda
_{j}e_{j},\,\,j=1,2,...$; the $e_{j}$-s are the eigenfunctions of the
operator $A$ and the $\lambda _{j}$-s are the corresponding eigenvalues. It
follows from this remark that
\begin{equation*}
E\int_{0}^{T-\delta }\sup_{0\leq \theta \leq \delta }|u^{\alpha }(t+\theta
)-u^{\alpha }(t)|_{\mathbb{H}^{-4}}^{2}dt\leq C\delta .
\end{equation*}%
A similar argument can be carried out to proving the same estimate for the
case $\theta <0$.
\end{proof}

\subsection{Compactness result and passage to the limit}

The following compactness result plays a crucial role in the proof of the
tightness of the probability measures generated by the sequence $%
(u^\alpha)_{\alpha\in [0,1)}$.

\begin{lem}
\label{lemma4}  Let $\mu_n$, $\nu_n$ two sequences of positive real numbers
which tend to zero as $n\rightarrow \infty$, the injection of
\begin{equation*}
D_{\nu_n,\mu_n}=\left\lbrace q\in L^\infty(0,T;\mathbb{H})\cap L^2(0,T;V);
\sup_{n}\frac{1}{\nu_n}\sup_{|\theta|\le \mu_n}\left(
\int_0^T|q(t+\theta)-q(t)|^2_{\mathbb{H}^{-4}}\right)^{1/2}<\infty\right%
\rbrace
\end{equation*}
in $L^2(0,T;\mathbb{H})$ is compact.
\end{lem}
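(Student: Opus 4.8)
The plan is to apply the abstract compactness criterion of Theorem~\ref{comsimon} with the triple $X=\ve$, $B=\h$, $Y=\h^{-4}(D)$. First I would check the hypotheses of that theorem. The embedding $\ve\subset\h$ is compact since $\ve\hookrightarrow\h^1(D)$ and the embedding $\h^1(D)\subset\el^2(D)$ is compact on the bounded domain $D$ (Rellich--Kondrachov, using periodicity and the mean-zero constraint); the embedding $\h\subset\h^{-4}(D)$ is obviously continuous. So the abstract set $\mathfrak F$ of functions $v\in L^2(0,T;\h)\cap L^1_{loc}(0,T;\ve)$ with
\begin{equation*}
\sup_{0\le h\le 1}\int_{t_1}^{t_2}|v(t+h)-v(t)|_{\h^{-4}}\,dt\longrightarrow 0\quad\text{as }h\to 0
\end{equation*}
for every $0<t_1<t_2<T$, is relatively compact in $L^p(0,T;\h)$ for every $p<2$; in particular, taking $q=2$ in the theorem, for $p=2$ we must work slightly harder (see below, but the $L^2$-in-time integrability is what we will recover).

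Next I would verify that $D_{\nu_n,\mu_n}\subset\mathfrak F$, i.e.\ that membership in $D_{\nu_n,\mu_n}$ forces the time-translation modulus of continuity in $L^1(t_1,t_2;\h^{-4})$ to vanish. Fix $q\in D_{\nu_n,\mu_n}$ with bound $\Lambda:=\sup_n \nu_n^{-1}\sup_{|\theta|\le\mu_n}\big(\int_0^T|q(t+\theta)-q(t)|_{\h^{-4}}^2\big)^{1/2}<\infty$. For any $h>0$ choose $n=n(h)$ with $\mu_{n}\ge h$ (possible, at least for $h$ small, since the tail of the sequence $(\mu_n)$ can be taken decreasing to $0$ and we may reindex so $\mu_n\downarrow 0$ — alternatively pick the largest $\mu_n\ge h$). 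Then by the defining inequality and Cauchy--Schwarz in time,
\begin{equation*}
\int_{t_1}^{t_2}|q(t+h)-q(t)|_{\h^{-4}}\,dt\le (t_2-t_1)^{1/2}\Big(\int_0^T|q(t+h)-q(t)|_{\h^{-4}}^2\,dt\Big)^{1/2}\le (t_2-t_1)^{1/2}\,\nu_{n(h)}\,\Lambda,
\end{equation*}
and $\nu_{n(h)}\to 0$ as $h\to 0$ because $n(h)\to\infty$. This gives the equicontinuity hypothesis of Theorem~\ref{comsimon} uniformly over $D_{\nu_n,\mu_n}$, and the sets $D_{\nu_n,\mu_n}$ are bounded in $L^\infty(0,T;\h)\cap L^2(0,T;\ve)$ by definition, hence bounded in $L^2(0,T;\h)$ and in $L^2_{loc}(0,T;\ve)$. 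Therefore $D_{\nu_n,\mu_n}$ is relatively compact in $L^p(0,T;\h)$ for every $p<2$.

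The main obstacle is upgrading from $p<2$ to $p=2$, i.e.\ obtaining compactness in $L^2(0,T;\h)$ itself rather than merely in $L^p(0,T;\h)$ for $p<2$. This is handled by an interpolation/uniform-integrability argument: given a sequence $(q_k)\subset D_{\nu_n,\mu_n}$, extract a subsequence converging in $L^{p}(0,T;\h)$ for some fixed $p\in(1,2)$, say $q_k\to q$; the uniform $L^\infty(0,T;\h)$ bound $\sup_k\|q_k\|_{L^\infty(0,T;\h)}\le R$ together with this strong $L^p$ convergence yields, via H\"older's inequality
\begin{equation*}
\int_0^T|q_k-q|_{\h}^2\,dt\le \Big(\int_0^T|q_k-q|_{\h}^{p}\,dt\Big)^{1/2}\Big(\int_0^T|q_k-q|_{\h}^{4-p}\,dt\Big)^{1/2}\le \|q_k-q\|_{L^p(0,T;\h)}^{p/2}\,(2R)^{(4-p)/2}\,T^{1/2},
\end{equation*}
(choosing $p$ so that the exponents pair correctly; e.g.\ write $2=\lambda p+(1-\lambda)\cdot\infty$ and interpolate the $L^2$ norm between $L^p$ and $L^\infty$), so that $q_k\to q$ in $L^2(0,T;\h)$. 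Since every sequence in $D_{\nu_n,\mu_n}$ has such a convergent subsequence, $D_{\nu_n,\mu_n}$ is relatively compact in $L^2(0,T;\h)$, which is the assertion. One should also remark that $D_{\nu_n,\mu_n}$ is closed in $L^2(0,T;\h)$ — or simply state the conclusion as relative compactness, which is what is used in the tightness argument — so "compact" in the statement is read as "the closure is compact" / "relatively compact".
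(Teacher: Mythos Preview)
Your proof is correct and follows the same route the paper indicates (apply Simon's compactness result, Theorem~\ref{comsimon}, with $X=\ve$, $B=\h$, $Y=\h^{-4}$); the paper gives no details beyond citing that theorem and \cite{bensoussan2}. One simplification: since the norm on $D_{\nu_n,\mu_n}$ already contains the $L^\infty(0,T;\h)$ piece, you may take $q=\infty$ in Theorem~\ref{comsimon} and obtain relative compactness in $L^2(0,T;\h)$ directly, making the interpolation step superfluous.
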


The proof, which is similar to the analogous result in \cite{bensoussan2},
follows from the application of Lemmas \ref{comsimon}, \ref{lem5} and \ref%
{finite-diff}. The space $D_{\nu _{n},\mu _{n}}$ is a Banach space with the
norm
\begin{equation*}
||q||_{D_{\nu _{n},\mu _{n}}}=\text{ess}\sup_{0\leq t\leq T}|q(t)|+\left(
\int_{0}^{T}||q(t)||^{2}\right) ^{1/2}+\sup_{n}\frac{1}{\nu _{n}}%
\sup_{|\theta |\leq \mu _{n}}\left( \int_{0}^{T}|q(t+\theta )-q(t)|_{\mathbb{%
H}^{-4}}^{2}\right) ^{1/2}.
\end{equation*}%
Alongside $D_{\nu _{n},\mu _{n}}$, we also consider the space $X_{p,\nu
_{n},\mu _{n}}$, $1\leq p<\infty $, of random variables $\zeta $ endowed
with the norm
\begin{equation*}
\begin{split}
E||\zeta ||_{X_{p,\nu _{n},\mu _{n}}}& =E\text{ess}\sup_{0\leq t\leq
T}|\zeta (t)|^{p}+E\left( \int_{0}^{T}||\zeta (t)||^{2}\right) ^{p/2} \\
& +E\sup_{n}\frac{1}{\nu _{n}}\sup_{|\theta |\leq \mu _{n}}\left(
\int_{0}^{T}|\zeta (t+\theta )-\zeta (t)|_{\mathbb{H}^{-4}}^{2}\right)
^{1/2};
\end{split}%
\end{equation*}
$X_{p,\nu _{n},\mu _{n}}$ is a Banach space.

\noindent Combining \eqref{lem52vao} and the estimates in Lemma \ref%
{finite-diff} we have

\begin{prop}
\label{prop1} For any real number $p\in \lbrack 1,\infty )$ and for any
sequences $\nu _{n},\mu _{n}$ converging to $0$ such that the series $%
\sum_{n}\frac{\sqrt{\mu _{n}}}{\nu _{n}}$ converges, the sequence $%
(u^{\alpha })_{\alpha \in \lbrack 0,1)}$ is bounded uniformly in $\alpha $
in $X_{p,\nu _{n},\mu _{n}}$ for all $n$.
\end{prop}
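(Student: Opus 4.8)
The plan is to show that the three terms in the norm $E\|u^{\alpha}\|_{X_{p,\nu_n,\mu_n}}$ are each bounded uniformly in $\alpha$, and then take the sum over the (finitely relevant) structure. The first two terms,
\begin{equation*}
E\operatorname*{ess\,sup}_{0\le t\le T}|u^{\alpha}(t)|^{p}
\quad\text{and}\quad
E\Bigl(\int_{0}^{T}\|u^{\alpha}(t)\|^{2}\,dt\Bigr)^{p/2},
\end{equation*}
are immediately controlled by \eqref{lem52vao} (equivalently \eqref{lem52}), since $|u^{\alpha}(t)|^{2}\le |u^{\alpha}(t)|^{2}+\alpha\|u^{\alpha}(t)\|^{2}$; so these contribute a constant $C$ independent of $\alpha$. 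The only genuinely new ingredient is the third term, the weighted modulus of continuity
\begin{equation*}
E\sup_{n}\frac{1}{\nu_{n}}\sup_{|\theta|\le\mu_{n}}\Bigl(\int_{0}^{T}|u^{\alpha}(t+\theta)-u^{\alpha}(t)|_{\mathbb{H}^{-4}}^{2}\,dt\Bigr)^{1/2}.
\end{equation*}

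For this term the strategy is to interchange the supremum over $n$ with the expectation by bounding the $\sup_n$ by a convergent series. Precisely, I would write
\begin{equation*}
\sup_{n}\frac{1}{\nu_{n}}\sup_{|\theta|\le\mu_{n}}\Bigl(\int_{0}^{T}|u^{\alpha}(t+\theta)-u^{\alpha}(t)|_{\mathbb{H}^{-4}}^{2}\,dt\Bigr)^{1/2}
\le\sum_{n}\frac{1}{\nu_{n}}\sup_{|\theta|\le\mu_{n}}\Bigl(\int_{0}^{T}|u^{\alpha}(t+\theta)-u^{\alpha}(t)|_{\mathbb{H}^{-4}}^{2}\,dt\Bigr)^{1/2},
\end{equation*}
take expectations, and apply Fubini/Tonelli (all terms nonnegative) to bring $E$ inside the sum. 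Then for each fixed $n$, Lemma \ref{finite-diff} with $\delta=\mu_{n}$ gives, via Cauchy--Schwarz in the probability space (to handle the outer power $1$ against the square root),
\begin{equation*}
E\sup_{|\theta|\le\mu_{n}}\Bigl(\int_{0}^{T-\mu_n}|u^{\alpha}(t+\theta)-u^{\alpha}(t)|_{\mathbb{H}^{-4}}^{2}\,dt\Bigr)^{1/2}
\le\Bigl(E\sup_{|\theta|\le\mu_{n}}\int_{0}^{T-\mu_n}|u^{\alpha}(t+\theta)-u^{\alpha}(t)|_{\mathbb{H}^{-4}}^{2}\,dt\Bigr)^{1/2}
\le C\sqrt{\mu_{n}},
\end{equation*}
with $C$ independent of $\alpha$ (and of $n$). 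Summing, the third term is bounded by $C\sum_{n}\sqrt{\mu_{n}}/\nu_{n}$, which is finite by the hypothesis on the series.

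Combining the three contributions gives $E\|u^{\alpha}\|_{X_{p,\nu_n,\mu_n}}\le C\bigl(1+\sum_{n}\sqrt{\mu_{n}}/\nu_{n}\bigr)<\infty$, uniformly in $\alpha\in[0,1)$, which is the assertion. I expect the only delicate point to be the bookkeeping needed to pass from the estimate of Lemma \ref{finite-diff} — which is stated for a single $\delta$ and for the integral over $[0,T-\delta]$ — to the $\sup_n$ form: one must check that the power $p$ appearing in $X_{p,\nu_n,\mu_n}$ only acts on the first two terms (as written in the definition) so that a plain $L^{1}(\Omega)$ bound via Cauchy--Schwarz suffices for the third, and that extending the time integral from $[0,T-\mu_n]$ to $[0,T]$ (by the natural convention $u^{\alpha}(t+\theta)=u^{\alpha}(T)$ or $u^{\alpha}(0)$ for out-of-range arguments, as implicit in the definition of $D_{\nu_n,\mu_n}$) does not spoil the $\sqrt{\mu_n}$ rate. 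Neither of these is a serious obstacle, so the proposition follows essentially from Lemma \ref{finite-diff}, \eqref{lem52vao}, and the summability of $\sum_n\sqrt{\mu_n}/\nu_n$.
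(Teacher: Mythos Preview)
Your proposal is correct and follows exactly the route the paper intends: the paper itself offers no detailed argument, stating only that the proposition follows by ``combining \eqref{lem52vao} and the estimates in Lemma \ref{finite-diff},'' and you have filled in precisely those details (bounding $\sup_n$ by $\sum_n$, applying Jensen/Cauchy--Schwarz to extract the square root, and invoking the summability of $\sum_n\sqrt{\mu_n}/\nu_n$). The minor bookkeeping points you flag (the $[0,T-\mu_n]$ versus $[0,T]$ range and the fact that the exponent $p$ affects only the first two terms) are handled in the same informal way in the paper and in the Bensoussan reference it follows.
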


Next we consider the space $\mathfrak{S}=C(0,T;\mathbb{R}^{m})\times
L^{2}(0,T;\mathbb{H})$ equipped with the Borel $\sigma $-algebra $\mathcal{B}%
(\mathfrak{S})$. For $\alpha \in \lbrack 0,1)$, let $\Phi _{\alpha }$ be the
measurable $\mathfrak{S}$-valued mapping defined on $(\Omega ,\mathcal{F},P)$
by
\begin{equation*}
\Phi _{\alpha }(\omega )=(\mathcal{W}(\omega ),u^{\alpha }(\omega )).
\end{equation*}%
For each $\alpha $ we introduce a probability measure $\Pi ^{\alpha }$ on $(%
\mathfrak{S};\mathcal{B}(\mathfrak{S}))$ defined by
\begin{equation*}
\Pi ^{\alpha }(S)=P(\Phi _{\alpha }^{-1}(S)),\text{ for any }S\in \mathcal{B}%
(\mathfrak{S}).
\end{equation*}

\begin{thm}
\label{thm4} The family of probability measures $\lbrace \Pi^\alpha: \alpha
\in [0,1)\rbrace$ is tight in $(\mathfrak{S};\mathcal{B}(\mathfrak{S}))$.
\end{thm}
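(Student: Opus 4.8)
The plan is to prove tightness by exhibiting, for each $\eta>0$, a compact set $K_\eta\subset\mathfrak{S}$ with $\Pi^\alpha(K_\eta)\ge 1-\eta$ uniformly in $\alpha\in[0,1)$. Since $\mathfrak{S}=C(0,T;\mathbb{R}^m)\times L^2(0,T;\mathbb{H})$ is a product, it suffices to produce a compact set in each factor (with the complement having small probability) and take the Cartesian product. For the first factor this is classical: the law of the fixed Wiener process $\mathcal{W}$ does not depend on $\alpha$, and a single probability measure on a Polish space is tight; concretely one uses that balls in $C^{\gamma}([0,T];\mathbb{R}^m)$ for $\gamma<1/2$ (which have compact closure in $C(0,T;\mathbb{R}^m)$ by Arzel\`a--Ascoli) carry full measure up to $\eta$, thanks to the moment bounds on Wiener increments.

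The real work is the second factor. First I would fix a pair of sequences $\mu_n,\nu_n\downarrow 0$ with $\sum_n \sqrt{\mu_n}/\nu_n<\infty$ (e.g. $\mu_n=n^{-4}$, $\nu_n=n^{-1}$), so that Lemma \ref{lemma4} applies with this choice. By Lemma \ref{lemma4} the embedding of $D_{\nu_n,\mu_n}$ into $L^2(0,T;\mathbb{H})$ is compact, hence for each $R>0$ the closed ball $B_R=\{q: \|q\|_{D_{\nu_n,\mu_n}}\le R\}$ is relatively compact in $L^2(0,T;\mathbb{H})$. The goal is then to estimate $\Pi^\alpha\big(\{q: \|q\|_{D_{\nu_n,\mu_n}}> R\}\big)=P\big(\|u^\alpha\|_{D_{\nu_n,\mu_n}}>R\big)$ uniformly in $\alpha$, and to see that it tends to $0$ as $R\to\infty$. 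By Chebyshev's inequality this is controlled by $R^{-1}E\|u^\alpha\|_{D_{\nu_n,\mu_n}}$, and Proposition \ref{prop1} (with $p=1$) tells us precisely that $E\|u^\alpha\|_{D_{\nu_n,\mu_n}}\le E\|u^\alpha\|_{X_{1,\nu_n,\mu_n}}\le C$, with $C$ independent of $\alpha$ — here one invokes \eqref{lem52vao} for the $\operatorname{ess\,sup}$ and $L^2(0,T;\mathbb{V})$ parts and Lemma \ref{finite-diff} (summed against $\sum_n\sqrt{\mu_n}/\nu_n<\infty$) for the finite-difference seminorm. Given $\eta>0$, choose $R$ so that $C/R<\eta/2$; set $K_\eta^{(2)}=\overline{B_R}$, which is compact in $L^2(0,T;\mathbb{H})$, and $P(u^\alpha\notin K_\eta^{(2)})<\eta/2$ for all $\alpha$.

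Finally I would assemble the pieces: pick a compact $K_\eta^{(1)}\subset C(0,T;\mathbb{R}^m)$ with $P(\mathcal{W}\notin K_\eta^{(1)})<\eta/2$, set $K_\eta=K_\eta^{(1)}\times K_\eta^{(2)}$, which is compact in $\mathfrak{S}$, and estimate
\[
\Pi^\alpha(\mathfrak{S}\setminus K_\eta)=P\big((\mathcal{W},u^\alpha)\notin K_\eta\big)\le P(\mathcal{W}\notin K_\eta^{(1)})+P(u^\alpha\notin K_\eta^{(2)})<\eta,
\]
uniformly in $\alpha\in[0,1)$, which is exactly tightness. The main obstacle is the second factor, and within it the only nontrivial analytic input is the uniform-in-$\alpha$ bound on the finite-difference seminorm supplied by Lemma \ref{finite-diff}, together with the summability condition $\sum_n\sqrt{\mu_n}/\nu_n<\infty$ that makes Proposition \ref{prop1} applicable; once those are in hand the argument is a routine Chebyshev-plus-compact-embedding tightness criterion.
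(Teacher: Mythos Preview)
Your proposal is correct and follows essentially the same approach as the paper: split the product space, handle the Wiener factor via Arzel\`a--Ascoli and moment bounds on increments, and for the $L^2(0,T;\mathbb{H})$ factor take a ball in $D_{\nu_n,\mu_n}$ (with $\sum_n\sqrt{\mu_n}/\nu_n<\infty$), invoke Lemma~\ref{lemma4} for compactness, and use Chebyshev together with Proposition~\ref{prop1} (case $p=1$) to bound the complement uniformly in $\alpha$. The paper carries out the Wiener-process part with an explicit dyadic computation rather than citing H\"older regularity, but the substance is identical.
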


\begin{proof}
For $\varepsilon>0$ we should find compact subsets
\begin{equation*}
\Sigma_\varepsilon \subset C(0,T;\mathbb{R}^m); Y_\varepsilon\subset L^2(0,T;%
\mathbb{H}),
\end{equation*}
such that
\begin{align}
P\left( \omega: \mathcal{W}(\omega,.)\notin \Sigma_\varepsilon\right)\le
\frac{\varepsilon}{2},  \label{47*} \\
P\left( \omega: u^\alpha(\omega,.)\notin Y_\varepsilon\right)\le \frac{%
\varepsilon}{2},  \label{47**}
\end{align}
for all $\alpha$.

The quest for $\Sigma _{\varepsilon }$ is made by taking into account some
facts about Wiener process such as the formula
\begin{equation}
E|\mathcal{W}(t)-\mathcal{W}(s)|^{2j}=(2j-1)!(t-s)^{j},j=1,2,....  \label{48}
\end{equation}%
For a constant $L_{\varepsilon }>0$ depending on $\varepsilon $ to be fixed
later and $n\in \mathbb{N}$, we consider the set
\begin{equation*}
\Sigma _{\varepsilon }=\{\mathcal{W}(.)\in C(0,T;\mathbb{R}^{m}):\sup
_{\substack{ t,s\in \lbrack 0,T] \\ |t-s|<\frac{1}{n^{6}}}}n|\mathcal{W}(s)-%
\mathcal{W}(t)|\leq L_{\varepsilon }\}.
\end{equation*}%
The set $\Sigma _{\varepsilon }$ is relatively compact in $C(0,T;\mathbb{R}%
^{m})$ by Arzela-Ascoli's theorem. Furthermore $\Sigma _{\varepsilon }$ is
closed in $C(0,T;\mathbb{R}^{m})$, therefore it is compact in $C(0,T;\mathbb{%
R}^{m})$. Making use of Markov's inequality
\begin{equation*}
P(\omega ;\zeta (\omega )\geq \beta )\leq \frac{1}{\beta ^{k}}E[|\zeta
(\omega )|^{k}],
\end{equation*}%
for any random variable $\zeta $ and real numbers $k$ we get

\begin{align*}
P\left( \omega :\mathcal{W}(\omega )\notin \Sigma _{\varepsilon }\right) &
\leq P\left[ \cup _{n}\left\{ \omega :\sup_{\substack{ t,s\in \lbrack 0,T]
\\ |t-s|<\frac{1}{n^{6}}}}|\mathcal{W}(s)-\mathcal{W}(t)|\geq \frac{%
L_{\varepsilon }}{n}\right\} \right] , \\
& \leq \sum_{n=1}^{\infty }\sum_{i=0}^{n^{6}-1}\left( \frac{n}{%
L_{\varepsilon }}\right) ^{4}E\sup_{\frac{iT}{n^{6}}\leq t\leq \frac{(i+1)T}{%
n^{6}}}|\mathcal{W}(t)-\mathcal{W}(iTn^{-6}|^{4}, \\
& \leq C\sum_{n=1}^{\infty }\sum_{i=0}^{n^{6}-1}\left( \frac{n}{%
L_{\varepsilon }}\right) ^{4}(Tn^{-6})^{2}n^{6}=\frac{C}{L_{\varepsilon }^{4}%
}\sum_{n=1}^{\infty }\frac{1}{n^{2}},
\end{align*}%
where we have used \eqref{48}. Since the right hand side of \eqref{48} is
independent of $\alpha $, then so is the constant $C$ in the above estimate.
We take $L_{\varepsilon }^{4}=\frac{1}{2C\varepsilon }\left(
\sum_{n=1}^{\infty }\frac{1}{n^{2}}\right) ^{-1}$ and get \eqref{47*}.

Next we choose $Y_\varepsilon$ as a ball of radius $M_\varepsilon$ in $%
D_{\nu_n,\mu_m}$ centered at 0 and with $\nu_n,\mu_n$ independent of $%
\varepsilon$, converging to 0 and such that the series $\sum_{n}\frac{\sqrt{%
\mu_n}}{\nu_n}$ converges, from Lemma \ref{lemma4},  $Y_\varepsilon$ is a
compact subset of $L^2(0,T;\mathbb{H})$. Furthermore, we have
\begin{align*}
P\left(\omega: u^\alpha(\omega)\notin Y_\varepsilon\right) \le&
P\left(\omega: ||u^\alpha||_{D_{\nu_n,\mu_m}}>M_\varepsilon \right) \\
\le& \frac{1}{M_\varepsilon}\left(E||u^\alpha||_{D_{\nu_n,\mu_m}}\right), \\
\le& \frac{1}{M_\varepsilon}\left( E||u^\alpha||_{X_{1,\nu_n,\mu_n}}\right),
\\
\le& \frac{C}{M_\varepsilon},
\end{align*}
where $C>0$ is independent of $\alpha$ (see Proposition \ref{prop1} for the
justification.)

Choosing $M_\varepsilon=2C\varepsilon^{-1}$, we get \eqref{47**}. From the
inequalities \eqref{47*}-\eqref{47**} we deduce that
\begin{equation*}
P\left(\omega:\mathcal{W}(\omega)\in \Sigma_\varepsilon; u^\alpha(\omega)\in
Y_\varepsilon\right)\ge 1-\varepsilon,
\end{equation*}
for all $\alpha \in [0,1)$. This proves that for all $\alpha \in [0,1)$
\begin{equation*}
\Pi^\alpha(\Sigma_\varepsilon\times Y_\varepsilon)\ge 1-\varepsilon,
\end{equation*}
from which we deduce the tightness of $\lbrace \Pi^\alpha: \alpha\in
[0,1)\rbrace$ in $(\mathfrak{S},\mathcal{B}(\mathfrak{S}))$.
\end{proof}

Prokhorov's compactness result enables us to extract from $\left( \Pi
^{\alpha }\right) $ a subsequence $\left( \Pi ^{\alpha _{j}}\right) $ such
that
\begin{equation*}
\Pi ^{\alpha _{j}}\text{ weakly converges to a probability measure }\Pi
\text{ on }\mathfrak{S}.
\end{equation*}%
Skorokhod's Theorem ensures the existence of a complete probability space $(%
\bar{\Omega},\bar{\mathcal{F}},\bar{P})$ and random variables $(\mathcal{W}%
^{\alpha _{j}},u^{\alpha _{j}})$ and $(\bar{\mathcal{W}},v)$ defined on $(%
\bar{\Omega},\bar{\mathcal{F}},\bar{P})$ with values in $\mathfrak{S}$ such
that
\begin{align}
& \text{The probability law of }(\mathcal{W}^{\alpha _{j}},u^{\alpha _{j}})%
\text{ is }\Pi ^{\alpha _{j}},  \label{Sko4} \\
& \text{The probability law of }(\bar{\mathcal{W}},v)\text{ is }\Pi ,
\label{Sko3} \\
& \mathcal{W}^{\alpha _{j}}\rightarrow \bar{\mathcal{W}}\text{ in }C(0,T;%
\mathbb{R}^{m})\,\,\bar{P}-\text{a.s.,}  \label{Skorohod1} \\
& u^{\alpha _{j}}\rightarrow v\text{ in }L^{2}(0,T;\mathbb{H})\,\,\bar{P}-%
\text{a.s..}  \label{Skorohod2}
\end{align}%
We let $\bar{\mathcal{F}}^{t}$ be the $\sigma $-algebra generated by $(\bar{%
\mathcal{W}}(s),v(s)),0\leq s\leq t$ and the null sets of $\bar{\mathcal{F}}$%
. We will show that $\bar{\mathcal{W}}$ is an $\bar{\mathcal{F}}^{t}$%
-adapted standard $\mathbb{R}^{m}$-valued Wiener process. To fix this, it is
sufficient to show that for any $0<t_{1}<t_{2}<\ldots <t_{m}=T$, the
increments process $\bar{\mathcal{W}}(t_{j})-\bar{\mathcal{W}}(t_{j-1}))$
are independent with respect to $\bar{\mathcal{F}}^{t_{j-1}}$, distributed
normally with mean $0$ and variance $t_{j}-t_{j-1}$. That is, to show that
for any $\lambda _{j}\in \mathbb{R}^{m}$ and $i^{2}=-1$
\begin{equation}
\bar{E}\exp \left( i\sum_{j=1}^{m}\lambda _{j}(\bar{\mathcal{W}}(t_{j})-\bar{%
\mathcal{W}}(t_{j-1}))\right) =\prod_{j=1}^{m}\exp \left( -\frac{1}{2}%
\lambda _{j}^{2}(t_{j}-t_{j-1})\right) .  \label{wiener}
\end{equation}%
The equation \eqref{wiener} will follow if we have
\begin{equation}
\bar{E}\left[ \exp \left( i\lambda (\bar{\mathcal{W}}(t+\theta )-\bar{%
\mathcal{W}}(t))\right) \right/ \bar{\mathcal{F}}^{t}]=\exp \left( -\frac{%
|\lambda |^{2}\theta }{2}\right) .  \label{expected}
\end{equation}%
We rely on the fact that for any random variables $X$ and $Y$ on any
probability space $(\bar{\Omega},\bar{\mathcal{F}},\bar{P})$ such that $X$
is $\bar{\mathcal{F}}$-measurable and $\bar{E}|Y|<\infty $, $\bar{E}%
|XY|<\infty $, we have
\begin{equation*}
\bar{E}(XY/\bar{\mathcal{F}})=X\bar{E}(Y/\bar{\mathcal{F}}),\,\,\,\,\,\bar{E}%
\bar{E}(Y/\bar{\mathcal{F}})=\bar{E}(Y),
\end{equation*}%
that is,
\begin{equation}
\bar{E}(XY)=\bar{E}(X\bar{E}(Y/\bar{\mathcal{F}})).  \label{expectation}
\end{equation}%
Now, let us consider an arbitrary bounded continuous functional $\vartheta
_{t}(\mathcal{W},v)$ on $\mathfrak{S}$ depending only on the values of $%
\mathcal{W}$ and $v$ on $(0,T)$. Owing to the independence of $\mathcal{W}(t)
$ to $\vartheta _{t}(\mathcal{W},v)$ and the fact that $\mathcal{W}$ is a
Wiener process, we have
\begin{align*}
& E\left[ \exp \left( i\lambda (\mathcal{W}(t+\theta )-\mathcal{W}%
(t))\right) \vartheta _{t}(\mathcal{W},v)\right]  \\
& =E\left[ \exp \left( i\lambda (\mathcal{W}(t+\theta )-\mathcal{W}%
(t))\right) \right] E\left[ \vartheta _{t}(\mathcal{W},v)\right]  \\
& =\exp \left( -\frac{|\lambda |^{2}\theta }{2}\right) E\left[ \vartheta
_{t}(\mathcal{W},v)\right] .
\end{align*}%
In view of \eqref{Sko4}-\eqref{Sko3}, this implies that
\begin{align*}
& \bar{{E}}\left[ \exp \left( i\lambda (\mathcal{W}^{\alpha _{j}}(t+\theta
)-W^{\alpha _{j}}(t))\right) \vartheta _{t}(\mathcal{W}^{\alpha _{j}},v)%
\right]  \\
& ={\bar{E}}\left[ \exp \left( i\lambda (\mathcal{W}^{\alpha _{j}}(t+\theta
)-\mathcal{W}^{\alpha _{j}}(t))\right) \right] {\bar{E}}\left[ \vartheta
_{t}(\mathcal{W}^{\alpha _{j}},v)\right]  \\
& =\exp \left( -\frac{|\lambda |^{2}\theta }{2}\right) {\bar{E}}\left[
\vartheta _{t}(\mathcal{W}^{\alpha _{j}},v)\right] .
\end{align*}%
Now, the convergence \eqref{Skorohod1} and the continuity of $\vartheta $
allow us to pass to the limit in this latter equation and obtain
\begin{equation*}
\bar{E}\left[ \exp \left( i\lambda (\bar{\mathcal{W}}(t+\theta )-\bar{%
\mathcal{W}}(t))\right) \vartheta _{t}(\bar{\mathcal{W}},v)\right] =\exp
\left( -\frac{|\lambda |^{2}\theta }{2}\right) \bar{E}\left[ \vartheta _{t}(%
\bar{\mathcal{W}},v)\right] ,
\end{equation*}%
which, in view of \eqref{expectation}, implies \eqref{expected}. The choice
of the above filtration implies then that $\bar{\mathcal{W}}$ is a $\bar{%
\mathcal{F}}^{t}$-standard $m$-dimensional Wiener process.

By a similar method as used in \cite{bensoussan} (see also \cite{PAUL1}), we
can prove the following result.

\begin{thm}
\label{INTTHM} For any $j\ge1$, $\phi\in\mathcal{V}$, for all $t\in[0,T]$
the following holds almost surely
\begin{equation}
\begin{split}
(u^{\alpha_{j}},\phi)_{\mathbb{V}}+\int_{0}^{t}\{(\nu
Au^{\alpha_{j}}+B(u^{\alpha_{j}},u^{\alpha_{j}}),\phi)\}dt=(u_{0},\phi)_{%
\mathbb{V}}+\int_{0}^{t}(R(u^{\alpha_{j}})+F(u^{\alpha_{j}}),\phi)dt \\
+\int_{0}^{t}(G,\phi)d\mathcal{W}^{\alpha_{j}},
\end{split}
\label{thm8}
\end{equation}
where
\begin{align*}
B(u^{\alpha_{j}},u^{\alpha_{j}}) & =\mathbb{P}(u^{\alpha_{j}}.\nabla
u^{\alpha_{j}}), \\
R(u^{\alpha_{j}}) & =\alpha\sum_{i,k}\mathbb{P}\left(\partial_{i}%
\partial_{k}(u_{i}^{\alpha_{j}}\partial_{k}u^{\alpha_{j}})+\partial_{i}(%
\partial_{k}u_{i}^{\alpha_{j}}\partial_{k}u^{\alpha_{j}})-\partial_{k}(%
\partial_{k}u_{i}^{\alpha_{j}}\nabla u_{i}^{\alpha_{j}})\right)
\end{align*}
\end{thm}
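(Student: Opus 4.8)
The plan is to transfer the integral identity satisfied $P$-a.s. by the original solution $u^{\alpha_j}$ on $(\Omega,\mathcal F,P)$ to the new probability space $(\bar\Omega,\bar{\mathcal F},\bar P)$ supplied by Skorokhod's theorem. Since $(\mathcal W^{\alpha_j},u^{\alpha_j})$ has the same law $\Pi^{\alpha_j}$ on $\mathfrak S$ as the pair $(\mathcal W,u^{\alpha_j})$ built on $(\Omega,\mathcal F,P)$, any event defined through a Borel-measurable functional on $\mathfrak S$ has the same probability under $\bar P$ as under $P$. So the strategy is: (i) rewrite the strong-solution identity of Definition \ref{defstrong} in the projected form \eqref{thm8}, using the decomposition \eqref{secondgrade3} of the $\curl$-term into $B$, $R$ and the pressure gradient (which disappears after applying the Leray projector $\mathbb P$ and testing against $\phi\in\mathcal V$); (ii) observe that, for fixed $\phi\in\mathcal V$ and $t\in[0,T]$, the quantity
\begin{equation*}
\Lambda_{t,\phi}(\mathcal W,u)=(u(t),\phi)_{\mathbb V}+\int_0^t\big(\nu Au+B(u,u),\phi\big)\,ds-(u_0,\phi)_{\mathbb V}-\int_0^t\big(R(u)+F(u),\phi\big)\,ds-\int_0^t(G,\phi)\,d\mathcal W
\end{equation*}
is a measurable functional of the trajectory $(\mathcal W,u)\in\mathfrak S$; (iii) conclude that $\Lambda_{t,\phi}(\mathcal W^{\alpha_j},u^{\alpha_j})=0$ $\bar P$-a.s. because the corresponding functional vanishes $P$-a.s. on the original space and the two pairs share the law $\Pi^{\alpha_j}$.

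The key point in step (ii) is the measurability of the stochastic integral term as a functional on the path space. This is the step I expect to be the main obstacle, and it is handled exactly as in \cite{bensoussan} (see also \cite{PAUL1}): one approximates $\int_0^t(G,\phi)\,d\mathcal W$ by Riemann--Stieltjes-type sums $\sum_k (G(s_k),\phi)\big(\mathcal W(s_{k+1})-\mathcal W(s_k)\big)$ along a sequence of partitions with mesh tending to zero. Each such sum is a continuous (hence Borel) functional of $(\mathcal W,u)\in\mathfrak S$, and the sums converge in $L^2(\Omega)$ (and along a subsequence $P$-a.s.) to the Itô integral; since the law of the driving pair is the same on both spaces, the same approximating sums converge in $L^2(\bar\Omega)$ to a limit which one identifies as $\int_0^t(G,\phi)\,d\mathcal W^{\alpha_j}$ — here one uses that $\mathcal W^{\alpha_j}$ has been shown above to be a genuine $\bar{\mathcal F}^t$-Wiener process, so the Itô integral against it is well defined. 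All the other terms in $\Lambda_{t,\phi}$ are continuous or measurable functionals of $(\mathcal W,u)$ in the topology of $\mathfrak S=C(0,T;\mathbb R^m)\times L^2(0,T;\mathbb H)$: the linear terms $(u(t),\phi)_{\mathbb V}$ and $(u_0,\phi)_{\mathbb V}$ are fine for a.e. $t$, the time integrals of $\nu Au$, $R(u)$ and $F$ are continuous in $u$ for the relevant weak pairings against the smooth test function $\phi$, and the nonlinear term $(B(u,u),\phi)=-\int_D (u\otimes u):\nabla\phi\,dx$ is continuous on $L^2(0,T;\mathbb H)$ because $\nabla\phi\in L^\infty$.

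Concretely I would organize the proof as follows. First, on the original space, integrate the equation in Definition \ref{defstrong} and use \eqref{secondgrade3} together with $\mathbb P\nabla(\cdot)=0$ to get \eqref{thm8} with $\mathcal W$ in place of $\mathcal W^{\alpha_j}$; this holds $P$-a.s. for each fixed $\phi\in\mathcal V$ and $t\in[0,T]$, hence (by continuity in $t$ of the paths, which are $\mathbb W$-valued weakly continuous) $P$-a.s. for all $t$ simultaneously for a fixed countable dense family of $\phi$. Second, fix $\phi$ and $t$ and introduce the functional $\Lambda_{t,\phi}$ above; the only delicate term being the stochastic integral, exhibit it as an a.s.\ limit of the continuous functionals given by the Riemann sums, so that $\{\Lambda_{t,\phi}=0\}$ is (up to a $P$-null set) an event in $\sigma(\mathcal W,u^{\alpha_j})$ on which $\Pi^{\alpha_j}$ is concentrated. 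Third, transport this via \eqref{Sko4}: since $(\mathcal W^{\alpha_j},u^{\alpha_j})$ has law $\Pi^{\alpha_j}$ on $\mathfrak S$, we get $\bar P(\Lambda_{t,\phi}(\mathcal W^{\alpha_j},u^{\alpha_j})=0)=1$ and, identifying the limit of the Riemann sums under $\bar P$ with the true Itô integral $\int_0^t(G,\phi)\,d\mathcal W^{\alpha_j}$, this is precisely \eqref{thm8}. Finally, using the weak continuity of the paths of $u^{\alpha_j}$ (which is preserved under the change of space since it is a property of the law on $\mathfrak S$, or can be re-derived from the a priori bounds of Lemma \ref{lem5} which also transfer), upgrade the identity from a.e.\ $t$ and a countable dense set of $\phi$ to all $t\in[0,T]$ and all $\phi\in\mathcal V$, which completes the proof.
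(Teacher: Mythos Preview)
Your proposal is correct and follows exactly the route the paper itself indicates, namely the transfer-of-law argument of Bensoussan \cite{bensoussan} (see also \cite{PAUL1}): rewrite the solution identity in the projected form via \eqref{secondgrade3}, express each term as a Borel functional on $\mathfrak S$ (the stochastic integral via Riemann--Stieltjes sums), and use \eqref{Sko4} to push the $P$-a.s.\ identity to $\bar P$. One small caveat: your claim that $\int_0^t(R(u),\phi)\,ds$ is continuous on $L^2(0,T;\mathbb H)$ is not quite right, since even after moving all derivatives onto $\phi$ a factor $\partial_k u$ survives in a quadratic expression; the fix is to note (as you in fact do later) that the a priori bound of Lemma~\ref{lem5} in $L^2(0,T;\mathbb V)$ transfers by law, so the Skorokhod representatives lie $\bar P$-a.s.\ in $L^2(0,T;\mathbb V)$, on which the functional is well defined and Borel measurable for the $\mathfrak S$-structure.
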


To back the main theorem we have to pass to the limit in the equation (\ref%
{thm8}). Since $u^{\alpha _{j}}$ satisfies (\ref{thm8}) then $u^{\alpha _{j}}
$ satisfies the estimates in Lemma \ref{lem5}. Consequently, we can extract
from $(u^{\alpha _{j}})$ a subsequence denoted by the same symbol such that
\begin{align}
u^{\alpha _{j}}& \rightharpoonup v\text{ weak-}\ast \text{ in }L^{2}(\bar{%
\Omega},\bar{\mathcal{F}},\bar{P};L^{\infty }(0,T;\mathbb{H})),  \notag \\
u^{\alpha _{j}}& \rightharpoonup v\text{ weakly in }L^{2}(\bar{\Omega},\bar{%
\mathcal{F}},\bar{P};L^{2}(0,T;\mathbb{V})).  \label{conv1}
\end{align}%
We derive from (\ref{iii})-(\ref{v}) that
\begin{equation*}
R(u^{\alpha _{j}})\rightarrow 0\text{ in }L^{2}(\bar{\Omega},\bar{\mathcal{F}%
},\bar{P};L^{2}(0,T;\mathbb{H}^{-4})).
\end{equation*}%
Thus
\begin{equation*}
(R(u^{\alpha _{j}}),\phi )\rightarrow 0\text{ in }L^{2}(\bar{\Omega},\bar{%
\mathcal{F}},\bar{P};L^{2}(0,T)),\forall \phi \in \mathcal{V}.
\end{equation*}%
Since $A$ is linear and strongly continuous then owing to (\ref{conv1}) we
have
\begin{equation*}
Au^{\alpha _{j}}\rightharpoonup Av\text{ weakly in }L^{2}(\bar{\Omega},\bar{%
\mathcal{F}},\bar{P};L^{2}(0,T;\mathbb{H}^{-1})).
\end{equation*}%
Hence
\begin{equation*}
<Au^{\alpha _{j}},\phi >\rightharpoonup <Av,\phi >\text{ weakly in }L^{2}(%
\bar{\Omega},\bar{\mathcal{F}},\bar{P};L^{2}(0,T))\text{ for any }\phi \in
\mathcal{V}.
\end{equation*}%
We derive from (\ref{Skorohod2}), the estimate (\ref{lem52}) of Lemma \ref%
{lem5} and Vitali's Theorem that
\begin{equation}
u^{\alpha _{j}}\rightarrow v\text{ strongly in }L^{2}(\bar{\Omega},\bar{%
\mathcal{F}},\bar{P};L^{2}(0,T;\mathbb{H})).  \label{CONV2}
\end{equation}%
For any element $\zeta \in L^{\infty }(\bar{\Omega}\times \lbrack 0,T],d\bar{%
P}\otimes dt)$ and for any $\phi \in \mathcal{V}$ we have
\begin{equation*}
E\int_{0}^{T}<B(u^{\alpha _{j}},u^{\alpha _{j}}),\zeta \phi
>dt=-\sum_{i,k}E\int_{D\times \lbrack 0,T]}u_{i}^{\alpha _{j}}\partial
_{i}\phi _{k}\zeta u_{k}^{\alpha _{j}}dx\otimes dt.
\end{equation*}%
Owing to (\ref{CONV2})
\begin{equation*}
\zeta \partial _{i}\phi _{k}u_{k}^{\alpha _{j}}\rightarrow \zeta \partial
_{i}\phi _{k}v_{k}\text{ strongly in }L^{2}(\bar{\Omega},\bar{\mathcal{F}},%
\bar{P};L^{2}(0,T;\mathbb{H})).
\end{equation*}%
This and (\ref{CONV2}) again imply that
\begin{equation*}
\begin{split}
-\sum_{i,k}E\int_{D\times \lbrack 0,T]}u_{i}^{\alpha _{j}}\partial _{i}\phi
_{k}\zeta u_{k}^{\alpha _{j}}dx\otimes dt& \rightarrow
-\sum_{i,k}E\int_{D\times \lbrack 0,T]}v_{i}\partial _{i}\phi _{k}\zeta
v_{k}dx\otimes dt \\
& =E\int_{0}^{T}<B(v,v),\zeta \phi >dt.
\end{split}%
\end{equation*}%
That is
\begin{equation*}
<u^{\alpha _{j}}.\nabla u^{\alpha _{j}},\phi >\rightharpoonup <v.\nabla
v,\phi >\text{ weakly in }L^{2}(\bar{\Omega},\bar{\mathcal{F}},\bar{P}%
;L^{\infty }(0,T))\text{ for any }\phi \in \mathcal{V}.
\end{equation*}%
We readily have that
\begin{equation*}
\int_{0}^{t}(G,\phi )d\mathcal{W}^{\alpha _{j}}\rightharpoonup
\int_{0}^{t}(G,\phi )d\bar{\mathcal{W}}\text{ weakly in }L^{2}(\bar{\Omega},%
\bar{\mathcal{F}},\bar{P};L^{\infty }(0,T))\text{ for any }\phi \in \mathcal{%
V}.
\end{equation*}%
We have that
\begin{align*}
(u^{\alpha _{j}},\phi )_{\mathbb{V}}& =(u^{\alpha _{j}}-\alpha _{j}\Delta
u^{\alpha _{j}},\phi ) \\
& =(u^{\alpha _{j}},\phi )+\alpha _{j}((u^{\alpha _{j}},\phi )).
\end{align*}%
This implies that
\begin{equation*}
(u^{\alpha _{j}}-\alpha _{j}\Delta u^{\alpha _{j}}-v,\phi )=(u^{\alpha
_{j}}-v,\phi )+\alpha _{j}((u^{\alpha _{j}},\phi )).
\end{equation*}%
It follows from Lemma \ref{lem5} and (\ref{CONV2}) that
\begin{equation*}
(u^{\alpha _{j}}-\alpha _{j}\Delta u^{\alpha _{j}}-v,\phi )\rightarrow 0%
\text{ strongly in }L^{2}(\bar{\Omega},\bar{\mathcal{F}},\bar{P};L^{\infty
}(0,T))\text{ for any }\phi \in \mathcal{V}.
\end{equation*}%
Using all these convergences we can derive from (\ref{thm8}) that the
following holds almost surely
\begin{equation*}
(v,\phi )+\nu \int_{0}^{t}\{((v,\phi ))+(\mathbb{P}(v.\nabla v),\phi
)\}ds=(u_{0},\phi )+\int_{0}^{t}(F(v),\phi )ds+\int_{0}^{t}(G,\phi )d\bar{%
\mathcal{W}},
\end{equation*}%
for any $\phi \in \mathcal{V}$ and $t\in \lbrack 0,T]$. That is the system $(%
\bar{\Omega},\bar{\mathcal{F}},\bar{\mathcal{F}}^{t},\bar{P});(\bar{\mathcal{%
W}},v)$ is a weak solution of the stochastic Navier-Stokes equations.\newline
Owing to the estimates
\begin{equation*}
\bar{E}\sup_{0\leq s\leq T}|v(s)|^{\frac{p}{2}}+\nu \bar{E}\left(
\int_{0}^{T}||v(s)||^{2}ds\right) ^{\frac{P}{2}}<\infty ,
\end{equation*}%
the $\mathbb{H}$-valued process $v(.)$ has almost surely a weak-continuous
modification. This ends the proof of Theorem \ref{Mainthm}.

\section{Proof of Theorem \protect\ref{CONVPROB}}

The main ingredients of the proof of Theorem \ref{CONVPROB} are the pathwise
uniqueness for the two-dimensional stochastic Navier-Stokes equations and
the following lemma whose proof can be found in \cite{GYONGY}.

\begin{lem}
\label{CONVPROB1} Let $X$ be a Polish space. A sequence of a X-valued random
variables $\{x_{n};n\ge0\}$ converges in probability if and only if for
every subsequence of joint probability laws, $\{\nu_{n_{k},m_{k}};k\ge0\}$,
there exists a further subsequence which converges weakly to a probability
measure $\nu$ such that
\begin{equation*}
\nu\left(\{(x,y)\in X\times X;x=y\}\right)=1.
\end{equation*}
\end{lem}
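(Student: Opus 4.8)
The plan is to prove the two implications separately, the forward one being routine and the converse one carrying the real content. Throughout, fix a complete metric $d$ on $X$ compatible with its topology (available since $X$ is Polish), equip $X\times X$ with the product metric, and recall the standard fact that a sequence of $X$-valued random variables converges in probability if and only if it is Cauchy in probability, i.e. $P\bigl(d(x_n,x_m)>\varepsilon\bigr)\to 0$ as $n,m\to\infty$ for every $\varepsilon>0$. Completeness of $(X,d)$ is what makes Cauchy-in-probability sufficient: one extracts a subsequence with $P\bigl(d(x_{n_{k+1}},x_{n_k})>2^{-k}\bigr)<2^{-k}$, applies Borel--Cantelli to get an almost sure limit, and then upgrades to convergence in probability of the whole sequence by the triangle inequality.

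For the forward direction, suppose $x_n\to x$ in probability and let $(x_{n_k})$ and $(x_{m_k})$ be arbitrary subsequences. Then $(x_{n_k},x_{m_k})\to(x,x)$ in probability on $X\times X$, hence in law, so the \emph{entire} sequence of joint laws $\nu_{n_k,m_k}$ converges weakly to $\nu:=\mathrm{Law}(x,x)$, and clearly $\nu\bigl(\{(x,y)\in X\times X:x=y\}\bigr)=1$. In particular the required further subsequence exists trivially.

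For the converse I would argue by contradiction. If $(x_n)$ does not converge in probability, it is not Cauchy in probability, so there exist $\varepsilon>0$, $\delta>0$ and subsequences $n_k\to\infty$, $m_k\to\infty$ with $P\bigl(d(x_{n_k},x_{m_k})\ge\varepsilon\bigr)\ge\delta$ for all $k$. Apply the hypothesis to these two subsequences: along some further subsequence (which I relabel by $k$) the joint laws $\nu_{n_k,m_k}$ converge weakly to a probability measure $\nu$ with $\nu(\Delta)=1$, where $\Delta=\{(x,y)\in X\times X:x=y\}$; note the lower bound $P\bigl(d(x_{n_k},x_{m_k})\ge\varepsilon\bigr)\ge\delta$ persists along this sub-subsequence. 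Since the set $\{(x,y):d(x,y)\ge\varepsilon\}$ is closed in $X\times X$ and disjoint from $\Delta$, the Portmanteau theorem gives
\[
\delta\le\limsup_{k\to\infty}\nu_{n_k,m_k}\bigl(\{d\ge\varepsilon\}\bigr)\le\nu\bigl(\{d\ge\varepsilon\}\bigr)=0,
\]
a contradiction. Hence $(x_n)$ is Cauchy in probability, and by completeness it converges in probability.

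The only delicate point is this converse direction, and within it two standard facts that must be invoked precisely: first, that failure of convergence in probability is exactly failure of the Cauchy-in-probability property, which is what produces the two ``bad'' subsequences and the uniform lower bound $\delta$; and second, the correct use of the Portmanteau theorem with the \emph{closed} set $\{d\ge\varepsilon\}$ (rather than the open set $\{d>\varepsilon\}$, for which the inequality would point the wrong way). Everything else, including the reduction of Cauchy-in-probability to convergence via Borel--Cantelli, is formal.
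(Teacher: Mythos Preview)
Your argument is correct. Note, however, that the paper does not give its own proof of this lemma: it simply states the result and refers the reader to \cite{GYONGY} (Gy\"ongy--Krylov) for the proof. What you have written is precisely the standard proof of the Gy\"ongy--Krylov characterization --- the forward direction via convergence in distribution of the pair $(x_{n_k},x_{m_k})$ to the diagonal, and the converse via failure of the Cauchy-in-probability property together with the Portmanteau bound on the closed set $\{d\ge\varepsilon\}$ --- so there is nothing to compare beyond observing that you have supplied the omitted argument in full.
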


Now, let
\begin{equation*}
\mathfrak{S}=L^{2}(0,T;\mathbb{H})\times C(0,T;\mathbb{R}^{m}),
\end{equation*}%
\begin{equation*}
\mathfrak{S}^{\mathbb{H}}=L^{2}(0,T;\mathbb{H}),\quad \mathfrak{S}^{\mathcal{%
W}}=C(0,T:\mathbb{R}^{m}),
\end{equation*}%
and
\begin{equation*}
\mathfrak{S}^{\mathbb{H},\mathbb{H}}=L^{2}(0,T;\mathbb{H})\times L^{2}(0,T;%
\mathbb{H}).
\end{equation*}%
For any $S\in \mathcal{B}(\mathfrak{S}^{\mathbb{H}})$ we set $\Pi ^{\alpha
}(S)=P(u^{\alpha }\in S)$, and $\Pi _{\mathcal{W}}^{\alpha }(S)=P(\mathcal{W}%
\in S)$ for $S\in \mathcal{B}(\mathfrak{S}^{\mathcal{W}})$. Next, we define
the joint probability laws
\begin{align*}
\Pi ^{\alpha ,\beta }& =\Pi ^{\alpha }\times \Pi ^{\beta }, \\
\nu ^{\alpha ,\beta }& =\Pi ^{\alpha }\times \Pi ^{\beta }\times \Pi _{%
\mathcal{W}}^{\alpha }.
\end{align*}%
The following tightness property holds.
\begin{lem}\label{lem5}
The collection $\nu ^{\alpha ,\beta }$
(and hence any subsequence $\{\nu ^{\alpha _{j},\beta _{j}}\}$) is tight on $%
\mathfrak{S}^{\mathbb{H},\mathbb{H}}\times \mathfrak{S}^{\mathcal{W}}$.
\end{lem}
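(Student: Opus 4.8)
The plan is to establish tightness of the family $\{\nu^{\alpha,\beta}\}$ on the product Polish space $\mathfrak{S}^{\mathbb{H},\mathbb{H}}\times\mathfrak{S}^{\mathcal{W}}$ by reducing it to the tightness of each marginal. Recall that a finite product of tight families (one family on each factor) is tight on the product: indeed, given $\varepsilon>0$, if $K^{(1)}_\varepsilon, K^{(2)}_\varepsilon, K^{(3)}_\varepsilon$ are compact sets in the three factors carrying mass at least $1-\varepsilon/3$ under every member of the respective marginal families, then $K^{(1)}_\varepsilon\times K^{(2)}_\varepsilon\times K^{(3)}_\varepsilon$ is compact and carries mass at least $1-\varepsilon$ under every $\nu^{\alpha,\beta}$. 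So the task splits into three pieces: tightness of $\{\Pi^\alpha\}$ on $\mathfrak{S}^{\mathbb{H}}=L^2(0,T;\mathbb{H})$, tightness of $\{\Pi^\beta\}$ on the same space (which is the same family, indexed differently), and tightness of $\{\Pi^\alpha_{\mathcal{W}}\}$ on $\mathfrak{S}^{\mathcal{W}}=C(0,T;\mathbb{R}^m)$.

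First I would handle the Wiener marginal: $\Pi^\alpha_{\mathcal{W}}$ is simply the law of the fixed process $\mathcal{W}$ for every $\alpha$, so $\{\Pi^\alpha_{\mathcal{W}}\}$ is a single measure and is trivially tight; alternatively one invokes the set $\Sigma_\varepsilon$ constructed via Arzela--Ascoli and the moment identity \eqref{48} exactly as in the proof of Theorem \ref{thm4}, which gives $P(\mathcal{W}\notin\Sigma_\varepsilon)\le\varepsilon$ with $\Sigma_\varepsilon$ compact in $C(0,T;\mathbb{R}^m)$. Next, for the two $L^2(0,T;\mathbb{H})$-marginals, I would reuse verbatim the argument from Theorem \ref{thm4}: by Proposition \ref{prop1} the sequence $(u^\alpha)_{\alpha\in[0,1)}$ is bounded uniformly in $\alpha$ in $X_{1,\nu_n,\mu_n}$, where $\nu_n,\mu_n\to0$ are chosen with $\sum_n \sqrt{\mu_n}/\nu_n<\infty$; taking $Y_\varepsilon$ to be the ball of radius $M_\varepsilon=2C\varepsilon^{-1}$ in $D_{\nu_n,\mu_n}$, which is compact in $L^2(0,T;\mathbb{H})$ by Lemma \ref{lemma4}, and applying Markov's inequality gives $P(u^\alpha\notin Y_\varepsilon)\le C/M_\varepsilon\le\varepsilon$ uniformly in $\alpha$. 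This is exactly the content of \eqref{47**}, so nothing new must be proved here.

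Combining the three, for each $\varepsilon>0$ choose $\Sigma_{\varepsilon/3}\subset\mathfrak{S}^{\mathcal{W}}$ and $Y_{\varepsilon/3}\subset\mathfrak{S}^{\mathbb{H}}$ as above, and set $K_\varepsilon=Y_{\varepsilon/3}\times Y_{\varepsilon/3}\times\Sigma_{\varepsilon/3}$, a compact subset of $\mathfrak{S}^{\mathbb{H},\mathbb{H}}\times\mathfrak{S}^{\mathcal{W}}$. Since $\nu^{\alpha,\beta}=\Pi^\alpha\times\Pi^\beta\times\Pi^\alpha_{\mathcal{W}}$ is a product measure, its complement satisfies $\nu^{\alpha,\beta}(K_\varepsilon^c)\le\Pi^\alpha((Y_{\varepsilon/3})^c)+\Pi^\beta((Y_{\varepsilon/3})^c)+\Pi^\alpha_{\mathcal{W}}((\Sigma_{\varepsilon/3})^c)\le\varepsilon$, uniformly over all $\alpha,\beta\in[0,1)$, which is precisely tightness; the same bound applies to any subsequence $\{\nu^{\alpha_j,\beta_j}\}$. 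I do not anticipate a serious obstacle: the only subtlety is bookkeeping—making sure the compactness-of-products lemma is invoked correctly and that the estimates of Proposition \ref{prop1} and Lemma \ref{finite-diff} are genuinely uniform in the parameter—but all the analytic work has already been done in Section 4, so this lemma is essentially a packaging of Theorem \ref{thm4}.
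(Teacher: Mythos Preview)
Your proposal is correct and follows essentially the same route as the paper: both reuse the compact sets $Y_\varepsilon\subset L^2(0,T;\mathbb{H})$ and $\Sigma_\varepsilon\subset C(0,T;\mathbb{R}^m)$ from Theorem~\ref{thm4}, form the product $K_\varepsilon=Y_\varepsilon\times Y_\varepsilon\times\Sigma_\varepsilon$, and exploit that $\nu^{\alpha,\beta}$ is a product measure. The only cosmetic difference is that you bound $\nu^{\alpha,\beta}(K_\varepsilon^c)$ by a union bound with thresholds $\varepsilon/3$, whereas the paper bounds $\nu^{\alpha,\beta}(K_\varepsilon)$ from below by $(1-\varepsilon/4)^2(1-\varepsilon/2)\ge 1-\varepsilon$.
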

\begin{proof}
 The proof is very similar to Theorem \ref{thm4}. For any $\varepsilon>0$ we choose the sets $\Sigma_\varepsilon, Y_\varepsilon$ exactly as in the proof of Theorem \ref{thm4}
with appropriate modification on the constants $M_\varepsilon, L_\varepsilon$ so that $\Pi^\alpha(Y_\varepsilon)\ge 1-\frac{\varepsilon}{4}$ and $\Pi_\mathcal{W}(\Sigma_\varepsilon)\ge 1-\frac{\varepsilon}{2}$ for every
$\alpha \in (0,1)$. Now let us take $K_\varepsilon=Y_\varepsilon\times Y_\varepsilon\times \Sigma_\varepsilon$ which is a compact in $\mathfrak{S}$; it is not difficult to see that
 $\{\nu^{\alpha, \beta}(K_\varepsilon)\ge (1-\frac{\varepsilon}{4})^2(1-\frac{\varepsilon}{2}) \ge 1-\varepsilon$ for all $\alpha, \beta$. This completes the proof of the lemma.
\end{proof}
Lemma \ref{lem5} implies that there exists a subsequence from $\{\nu ^{\alpha _{j},\beta
_{j}}\}$ still denoted by $\{\nu ^{\alpha _{j},\beta _{j}}\}$ which
converges to a probability measure $\nu $. By Skorokhod's theorem there
exists a probability space $(\bar{\Omega},\bar{\mathcal{F}},\bar{P})$ on
which a sequence $(u^{\alpha _{j}},v^{\beta _{j}},\mathcal{W}^{j})$ is
defined and converges almost surely in $\mathfrak{S}^{\mathbb{H},\mathbb{H}%
}\times \mathfrak{S}^{\mathcal{W}}$ to a couple of random variables $(u,v,W)$%
. Furthermore, we have
\begin{align*}
Law(u^{\alpha _{j}},v^{\beta _{j}},\mathcal{W}^{j})& =\nu ^{\alpha
_{j},\beta _{j}}, \\
Law(u,v,W)& =\nu .
\end{align*}%
Now let $Z_{j}^{u}=(u^{\alpha _{j}},\mathcal{W}^{j})$, $Z_{j}^{v}=(v^{\beta
_{j}},\mathcal{W}^{j})$, $Z^{u}=(u,W)$ and $Z^{v}=(v,W)$ We can infer from
the above argument that $\left( \Pi ^{\alpha _{j},\beta _{j}}\right) $
converges to a measure $\Pi $ such that
\begin{equation*}
\Pi (\cdot )=\bar{P}((u,v)\in \cdot ).
\end{equation*}%
As above we can show that $Z_{j}^{u}$ and $Z_{j}^{v}$ satisfy Theorem \ref%
{INTTHM} and that $Z^{u}$ and $Z^{v}$ satisfy the stochastic Navier-Stokes
equation (see \eqref{defweaknc}) on the same stochastic system $(\bar{\Omega}%
,\bar{\mathcal{F}},\bar{P}),\bar{\mathcal{F}}^{t},W$. Since we are in
two-dimensional case then we see that $u(0)=v(0)$ almost surely and $u=v$ in
$L^{2}(0,T;\mathbb{H})$. Therefore
\begin{equation*}
\Pi \left( \{(x,y)\in \mathfrak{S}^{\mathbb{H},\mathbb{H}};x=y\}\right) =%
\bar{P}\left( u=v\text{ in }L^{2}(0,T;\mathbb{H})\right) =1.
\end{equation*}%
This fact together with Lemma \ref{CONVPROB1} imply that the original
sequence $\left( u^{\alpha }\right) $ defined on the original probability
space $(\Omega ,\mathcal{F},P)$ converges in probability to an element $v$
in the topology of $\mathfrak{S}^{\mathbb{H}}$.  By a passage to the limits argument as in the
previous section it is not difficult to show that $v$ is the unique solution of the stochastic Navier-Stokes Equations (on the original probability system
$(\Omega ,\mathcal{F},\mathbb{P}),\mathcal{F}^t, W$). This ends
the proof of Theorem \ref{CONVPROB}.

\section*{Acknowledgment}

The research of the authors is supported by the University of Pretoria and
the National Research Foundation South Africa. The first author is also very
grateful to the support he received from The Abdus Salam International Center
for Theoretical Physics.

\end{document}